\theoremstyle{plain}
\newtheorem{theorem}{Theorem}
\newtheorem{conjecture}[theorem]{Conjecture}
\newtheorem{proposition}[theorem]{Proposition}
\newtheorem{lemma}[theorem]{Lemma}
\theoremstyle{definition}
\newtheorem{problem}[theorem]{Problem}
\theoremstyle{remark}
\DeclareMathOperator{\cost}{cost}
\DeclareMathOperator{\conv}{conv}
\title{Intersecting diametral balls induced\\ by a geometric graph II}
\author[P.~Barabanshchikova, A.~Polyanskii]{{Polina Barabanshchikova and Alexander~Polyanskii}}
\address{Polina Barabanshchikova,
\newline\hphantom{iii} Moscow Institute of Physics and Technology, Institutskiy per. 9, Dolgoprudny, Russia 141700
}
\email{\href{mailto:barabanshchikova.piu@phystech.edu}{barabanshchikova.piu@phystech.edu}}
\address{Alexander Polyanskii,
\newline\hphantom{iii} Institute of Mathematics and Informatics, Bulgarian Academy of Sciences, Bulgaria, Sofia 1113, Acad. G. Bonchev Str., Bl. 8
}
\email{\href{mailto:alexander.polyanskii@gmail.com}{alexander.polyanskii@gmail.com}}
\urladdr{\url{http://polyanskii.com}}
\keywords{Infinite descent, convex optimization, Tverberg theorem, max-sum tree, max-sum matching, alternating cycle}
\subjclass[2010]{51K99, 05C50, 51F99, 52C99, 05A99}
\begin{document}

\thispagestyle{empty}

\begin{abstract}
For a graph whose vertices are points in $\mathbb R^d$, consider the closed balls with diameters induced by its edges. The graph is called a \textit{Tverberg graph} if these closed balls share a common point. 

A \textit{max-sum tree} of a finite point set $X \subset \mathbb R^d$ is a tree with vertex set $X$ that maximizes the sum of Euclidean distances of its edges among all trees with vertex set $X$. Similarly, a \textit{max-sum matching} of an even set $X \subset \mathbb R^d$ is a perfect matching of $X$ maximizing the sum of Euclidean distances between the matched points among all perfect matchings of~$X$.

We prove that a max-sum tree of any finite point set in $\mathbb R^d$ is a Tverberg graph, which generalizes a recent result of Abu-Affash et al., who established this claim in the plane. Additionally, we provide a new proof of a theorem by Bereg et al., which states that a max-sum matching of any even point set in the plane is a Tverberg graph. Moreover, we proved a slightly stronger version of this theorem.
\end{abstract}

\maketitle

\section{Introduction}

In 1966, Helge Tverberg~\cite{tverberg1966generalization} proved that for any $(r-1)(d+1)+1$ points in $\mathbb{R}^d$, there exists a partition of them into $r$ parts whose convex hulls intersect. This paper studies a variation of Tverberg's theorem recently introduced by Huemer et al.~\cite{huemer2019matching} and Sober{\'o}n and Tang~\cite{soberon2021tverberg}. We further develop the approach presented in the works of Pirahmad et al.~\cite{pirahmad2022intersecting} and the authors~\cite{barabanshchikova2022intersecting}. Specifically, we use Proposition~\ref{proposition: main tool}, a key tool from~\cite{barabanshchikova2022intersecting} that was implicitly used in~\cite{pirahmad2022intersecting}. For a discussion of this proposition, we refer the reader to Section~\ref{section: preliminaries}.

For any graph in this paper, we assume that its vertex set is a finite subset of~$\mathbb{R}^d$. The \textit{cost} of a graph~$G$, denoted by $\cost G$, is the sum of Euclidean distances between the pairs of vertices connected by an edge in~$G$. We define a \textit{max-sum tree} of a finite point set $X\subset \mathbb{R}^d$ as a tree with vertex set~$X$ that maximizes the cost among all trees with vertex set~$X$. Similarly, we define a \textit{max-sum matching} of an even point set $X\subset \mathbb{R}^d$ as a perfect matching with vertex set~$X$ that maximizes the cost among all perfect matchings with vertex set~$X$. It will be convenient to slightly abuse notation and sometimes consider a matching as a set of edges, not as a graph.

For two points $x,y\in \mathbb R^d$, we denote by $B(xy)$ the closed Euclidean ball with diameter~$xy$. We say that the ball $B(xy)$ is \textit{induced} by $xy$. A graph $G$ is called a \textit{Tverberg graph} if
\[
    \bigcap_{xy\in E(G)} B(xy) \neq \emptyset.
\]
Similarly, a graph is an \textit{open Tverberg
graph} if the open balls with diameters induced by its edges intersect.

Very recently, Abu-Affash et al.~\cite{abuaffash2022} proved that a max-sum tree of any finite point set in the plane is a Tverberg graph. In this paper, we generalize their result to higher dimensions.
\begin{theorem}
\label{theorem tree}
A max-sum tree of any finite point set in $\mathbb R^d$ is a Tverberg graph.
\end{theorem}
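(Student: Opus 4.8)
The plan is to argue by contradiction, combining the convexity input of Proposition~\ref{proposition: main tool} with the exchange (matroid) property of maximum-weight spanning trees. Assume $T$ is a max-sum tree of $X$ but $\bigcap_{xy\in E(T)}B(xy)=\emptyset$. Applying Proposition~\ref{proposition: main tool} to $T$ yields a point $p$, edges $x_1y_1,\dots,x_ky_k\in E(T)$ (call them \emph{active}) with $(p-x_i)\cdot(p-y_i)=c$ for a common $c>0$, and weights $\lambda_i>0$ with $\sum_i\lambda_i=1$ and $\sum_i\lambda_i\frac{x_i+y_i}{2}=p$. Translating, I take $p=0$, so $x_i\cdot y_i=c$ for all $i$ and $\sum_i\lambda_i(x_i+y_i)=0$; by Carathéodory's theorem applied to the midpoints $m_i:=\frac{x_i+y_i}{2}$ I may also assume $k\le d+1$. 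Each endpoint of an active edge $e_i$ is nonzero, and $x_i\cdot m_i=\tfrac12(|x_i|^2+c)>0$, $y_i\cdot m_i>0$, so both endpoints of $e_i$ lie strictly on the positive side of the hyperplane through $0$ orthogonal to $m_i$.

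The second ingredient is the exchange property: for every edge $ab\in E(T)$, if $(S,\bar S)$ is the partition of $X$ into the two components of $T-ab$ with $a\in S$, then $|uv|\le|ab|$ for all $u\in S$ and $v\in\bar S$ — otherwise $T-ab+uv$ is a spanning tree of strictly larger cost. Now let $q$ be an endpoint of an active edge maximizing $|q|$, say $q=x_{i_0}$, and set $R=|q|$. Since $\sum_i\lambda_i(m_i\cdot q)=0$ while $m_{i_0}\cdot q=\tfrac12(R^2+c)>0$, some active edge $e_j$ with $j\ne i_0$ has $m_j\cdot q<0$; relabel its endpoints so $x_j\cdot q<0$. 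Then
\[
 |q-x_j|^2=R^2+|x_j|^2-2\,q\cdot x_j\;>\;R^2+|x_j|^2\;\ge\;|x_j|^2+|y_j|^2\;>\;|x_j|^2+|y_j|^2-2c=|x_jy_j|^2,
\]
using $R\ge|y_j|$ and $c>0$, so the non-tree edge $qx_j$ is strictly longer than the active edge $e_j$. If $q$ and $x_j$ lay on opposite sides of the cut of $e_j$ in $T$, this would contradict the exchange property; hence $q$ and $x_j$ lie on the same side, so $q$ and $y_j$ lie on opposite sides, and applying the exchange property to $qy_j$ gives $q\cdot y_j\ge c+\tfrac12(R^2-|x_j|^2)>0$.

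To finish one continues in this vein. Each remaining active edge, being an edge of $T-e_j$, lies wholly on one side of the cut of $e_j$, so the identity $\sum_i\lambda_i(x_i+y_i)=0$ becomes a relation between the active edges on the two sides; applying the exchange property across the cuts of a suitable subcollection of these active edges and adding the resulting inequalities produces the contradiction, exactly as in dimension one (where $q$ and $y_j$ are the two extreme points). An equivalent route: delete all active edges from $T$, pass to the quotient tree on the resulting components, and run the comparison at a leaf component $C$ with connecting edge $x_ly_l$, $x_l\in C$; the barycentric identity forces $x_l=-t_l\,p'$ with $t_l>1$ and $p'\in\conv(X\setminus C)$, and exchanging an edge $x_lw^{\ast}$ (for $w^{\ast}\in X\setminus C$ with $w^{\ast}\cdot x_l\le-|x_l|^2/t_l$) across the cut of $e_l$ yields $|y_l|^2\ge|w^{\ast}|^2+\tfrac{2}{t_l}|x_l|^2+2c$, which one then closes off.

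The main obstacle is precisely this last step: a long non-edge contradicts maximality only when some short tree edge lies on the corresponding $T$-path, so one must control on which side of each active-edge cut the relevant vertices fall. This is the combinatorial heart of the proof — a case analysis over the positions of the extreme vertex (or leaf) relative to the active-edge cuts, organized by the barycentric identity supplied by Proposition~\ref{proposition: main tool} — and it is where the one-dimensional picture genuinely must be upgraded.
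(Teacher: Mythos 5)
Your proposal is not a complete proof: the argument stops exactly at the step you yourself identify as ``the main obstacle.'' The computations you do carry out are correct --- applying Proposition~\ref{proposition: main tool} to $\max_{ab\in E(T)}(x-a)\cdot(x-b)$ gives the barycentric identity $\sum_i\lambda_i m_i=p$ with $x_i\cdot y_i=c>0$ after translating $p$ to the origin, and the cut property of maximum-weight spanning trees is correctly stated --- but the crucial difficulty is that producing a non-tree edge $qx_j$ longer than the active edge $x_jy_j$ only yields a contradiction when $q$ and $x_j$ lie on opposite sides of the cut of $e_j$ (equivalently, when $e_j$ lies on the tree path from $q$ to $x_j$). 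You have no control over which side of each active-edge cut the vertex $q$ falls on; the two ``routes'' sketched in your last two paragraphs (summing exchange inequalities over a subcollection of cuts, or contracting to a quotient tree and working at a leaf component) are not carried out, and it is not clear either one closes. A proof whose combinatorial heart is left as ``one continues in this vein'' is a gap, not a proof.

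The paper avoids this difficulty entirely by applying Proposition~\ref{proposition: main tool} to a different function: not the diametral balls of the tree edges, but $H(x)=\max_{a\in X}\|x-a\|$, the smallest enclosing ball of the \emph{whole} vertex set $X$. The candidate common point is its center $o$, and the claim reduces to $\langle p,q\rangle\le 0$ for every tree edge $pq$. The balancing condition $o\in\conv X_r'$ (over boundary contact points, all at distance exactly $r$ from $o$) makes the auxiliary graph $G$ on $X_r'$ with edges $\{ab:\langle a,b\rangle\le 0\}$ \emph{connected}, and if some tree edge $pq$ had $\langle p,q\rangle>0$, one gets a path $p,z_1,\dots,z_k,q$ every segment of which is strictly longer than $pq$ (because each $z_i$ has norm $r\ge\max(\|p\|,\|q\|)$ and consecutive dot products are $\le 0$). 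Since this path connects the two components of $T-pq$, \emph{some} segment of it must cross the cut --- this is exactly the side-of-the-cut control your argument lacks --- and swapping it in for $pq$ increases the cost. You may want to note that choosing the right auxiliary convex function (here, the enclosing-ball radius rather than the tree's own diametral balls) is what makes the exchange argument go through.
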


In 2021, Bereg et al.~\cite{bereg2023maximum}*{Theorem~3.14} showed that a max-sum matching of any even set in the plane is a Tverberg graph. Our second result is a new proof of a slightly stronger version of their result under the assumption that points are distinct; see the discussion in Section~\ref{section discussion}.
\begin{theorem}
\label{theorem matching}
A max-sum matching of any even set of distinct points in the plane is an open Tverberg graph.
\end{theorem}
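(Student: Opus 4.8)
\medskip
\noindent\emph{Proof plan.}\quad Write the max-sum matching as $M=\{a_ib_i:1\le i\le n\}$, so that $X=\{a_1,b_1,\dots,a_n,b_n\}$ is a set of $2n$ distinct points. Since $p$ lies in the interior of $B(xy)$ exactly when $\langle x-p,\,y-p\rangle<0$, it suffices to produce a point $p$ with $\langle a_i-p,\,b_i-p\rangle<0$ for every $i$. The plan is to study the convex, coercive function
\[
  f(p)=\max_{1\le i\le n}\langle a_i-p,\,b_i-p\rangle=\max_{1\le i\le n}\bigl(\|p-c_i\|^{2}-r_i^{2}\bigr),
\]
where $c_i=\tfrac12(a_i+b_i)$ and $r_i=\tfrac12|a_ib_i|$ are the centre and radius of $B(a_ib_i)$. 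Let $p^{*}$ be a minimiser of $f$. Then $p^{*}$ lies in the interior of every diametral ball precisely when $f(p^{*})<0$, so it remains to rule out $f(p^{*})\ge 0$; assume this towards a contradiction.

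Next I would read off the local structure of the minimiser. By first-order optimality for a maximum of smooth convex functions---equivalently, through the mechanism of Proposition~\ref{proposition: main tool}---there is an active set $I=\{i:\langle a_i-p^{*},b_i-p^{*}\rangle=f(p^{*})\}$ and weights $\lambda_i>0$ ($i\in I$) with $\sum_{i\in I}\lambda_i=1$ and $p^{*}=\sum_{i\in I}\lambda_i c_i$; by Carath\'eodory's theorem in the plane we may discard members of $I$ and assume $|I|\le 3$ while keeping $p^{*}\in\conv\{c_i:i\in I\}$. For $i\in I$ the equality $\langle a_i-p^{*},b_i-p^{*}\rangle=f(p^{*})\ge 0$ says that $a_ib_i$ subtends an angle at most $\tfrac{\pi}{2}$ at $p^{*}$, so $p^{*}$ lies outside or on the boundary of $B(a_ib_i)$. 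The engine will be an exchange argument: replacing the edges $\{a_ib_i:i\in I\}$ of $M$ by any perfect matching of the vertex set $\bigcup_{i\in I}\{a_i,b_i\}$ yields another perfect matching $M'$ of $X$, so exhibiting one with $\cost M'>\cost M$ contradicts maximality of $M$. If $|I|=1$ then $p^{*}=c_i$, hence $f(p^{*})=-r_i^{2}<0$ (distinctness gives $r_i>0$), a contradiction; thus $|I|\in\{2,3\}$.

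If $|I|=2$, relabel $I=\{1,2\}$. Then $p^{*}$ lies strictly between $c_1$ and $c_2$, and with $t:=f(p^{*})\ge 0$ we have $\|p^{*}-c_i\|=\sqrt{r_i^{2}+t}$, so $|c_1c_2|=\sqrt{r_1^{2}+t}+\sqrt{r_2^{2}+t}\ge r_1+r_2$: the balls $B(a_1b_1)$ and $B(a_2b_2)$ are disjoint when $t>0$ and externally tangent at $p^{*}$ when $t=0$. Choosing coordinates with $p^{*}$ at the origin on the line $c_1c_2$, a direct computation shows that the cross-matching obtained by replacing $a_1b_1,a_2b_2$ with $a_1a_2,b_1b_2$ has cost at least $2(\|p^{*}-c_1\|+\|p^{*}-c_2\|)=2|c_1c_2|\ge 2(r_1+r_2)=|a_1b_1|+|a_2b_2|$ by the triangle inequality. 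If $t>0$ the first inequality is strict, contradicting maximality. If $t=0$, then either this or the corresponding bound for the other cross-matching $\{a_1b_2,b_1a_2\}$ is strict (again a contradiction), or equality holds throughout, which forces two of $a_1,b_1,a_2,b_2$ to coincide with $p^{*}$ and hence with each other, contradicting distinctness. So $|I|=2$ is impossible, too.

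The case $|I|=3$ is the heart of the matter, and the step I expect to be the main obstacle. After discarding collinear degeneracies (which reduce to $|I|\le 2$), $p^{*}$ lies strictly inside the triangle $c_1c_2c_3$ and outside all three balls. Placing the origin at $p^{*}$, so that $\langle a_i,b_i\rangle=t\ge 0$, $\|c_i\|^{2}=r_i^{2}+t$, and $\sum_{i\in I}\lambda_i(a_i+b_i)=0$, squaring the barycentric identity gives
\[
  \sum_{i<j}\lambda_i\lambda_j\langle c_i,c_j\rangle=-\tfrac12\sum_{i\in I}\lambda_i^{2}(r_i^{2}+t)<0,
\]
so some pair, say $\{1,2\}$, satisfies $\langle c_1,c_2\rangle<0\le t$. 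Combining this with the identity
\[
  |a_1a_2|^{2}+|b_1b_2|^{2}+|a_1b_2|^{2}+|b_1a_2|^{2}=2\bigl(|a_1b_1|^{2}+|a_2b_2|^{2}\bigr)+8\bigl(t-\langle c_1,c_2\rangle\bigr),
\]
the plan is to show that either a cross-matching on the pair $\{1,2\}$ alone, or---when that gives only equality---a suitable rearrangement of the six vertices $a_1,b_1,a_2,b_2,a_3,b_3$ that also moves the third edge, has total length strictly larger than that of $M$. Distinctness of the points is exactly what promotes the non-strict planar inequalities appearing along the way to strict ones; it is also the reason the argument is confined to the plane, since for $d\ge 3$ the open-ball version fails. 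Once $f(p^{*})<0$ is secured, $p^{*}$ lies in the interior of every $B(a_ib_i)$, so $M$ is an open Tverberg graph.
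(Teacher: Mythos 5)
Your setup (minimize the pointwise maximum of per-edge convex functions, apply Proposition~\ref{proposition: main tool}, reduce the active set to at most three edges by Carath\'eodory, and derive a contradiction with maximality via an exchange of edges) is the same strategy the paper follows, and your treatment of the cases $|I|=1$ and $|I|=2$ is correct and complete: the tangency analysis at $t=0$, including the observation that equality in both cross-matchings forces two points to coincide, is sound.

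However, the case $|I|=3$ is not proved, and the gap there is genuine rather than routine. Your key inequality
\[
 |a_1a_2|^{2}+|b_1b_2|^{2}+|a_1b_2|^{2}+|b_1a_2|^{2} > 2\bigl(|a_1b_1|^{2}+|a_2b_2|^{2}\bigr)
\]
controls \emph{squared} distances, and a surplus in the sum of squares does not imply a surplus in the sum of lengths; this is precisely the difference between the (easy) max-sum-of-squares matching result of Pirahmad et al.\ and the max-sum matching problem at hand. Worse, there are three-edge configurations in which \emph{no} exchange of two edges improves the matching and one must rearrange all six endpoints along a $6$-cycle; your plan acknowledges this possibility but supplies no mechanism for certifying that such a rearrangement increases total length. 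The paper's proof resolves exactly this point by working with the unsquared function $\|x-\frac{a+b}{2}\|-\|\frac{a-b}{2}\|$, so that every active disc is externally tangent to a common circle $\Omega$, and by assigning to each vertex $x$ a radius $r_x$ (its distance to the common internal tangent line of $\Omega$ and its diametral disc). Each matching edge then has length exactly $r_a+r_b$, and an ``improving'' pair has length strictly greater than $r_a+r_b$, so the length comparison telescopes over any alternating cycle, of length $4$ or $6$ alike. Finding the alternating cycle then requires two further geometric lemmas (separating tangent lines, the semicircle condition coming from the fact that the gradients are \emph{unit} vectors). None of this structure is available from your squared-distance (equal-power) normalization, so the hardest third of the argument remains to be supplied.
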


Remark that the condition that points are distinct is crucial. Indeed, Bereg et al. demonstrated that the intersection of the closed discs induced by a max-sum matching can be a singleton, that is, this matching is not an open Tverberg graph; see the third paragraph after Theorem~3.2 in~\cite{bereg2023maximum}. For example, consider 4 points $a, b, c$, and $d$ such that the points $b$ and $c$ coincide and lie in the interior of a line segment $ad$. One of its max-sum matchings\footnote[1]{One can easily verify that the other max-sum matchings $\{ac,bd\}$ and $\{ad,cb\}$ also satisfy this property.} consists of two edges $ab$ and $cd$, and the corresponding closed discs $B(ab)$ and $B(cd)$ have a single point in common. However, the open discs induced by these edges have an empty intersection.
\smallskip

The paper is organized as follows. In Section~\ref{section: preliminaries}, we state and discuss Proposition~\ref{proposition: main tool}, the main ingredient of the proofs. In Sections~\ref{section proof of theorem tree} and~\ref{section proof of theorem matching}, we prove Theorems~\ref{theorem tree} and~\ref{theorem matching}, respectively. In Section~\ref{section discussion}, we analyze the connection between Theorem~\ref{theorem matching} and the result of Bereg et al.~\cite{bereg2023maximum}. We conclude with a discussion of open problems in Section~\ref{section: open problems}.

\section{Preliminaries}
\label{section: preliminaries}
Recall standard definitions from convex geometry.
A~set $X \subseteq \mathbb R^d$ is called \textit{convex} if for any two points $a,b\in X$, the line segment $ab$ also lies in~$X$.
The \textit{convex hull} of a set~$X \subset \mathbb R^d$, denoted by $\conv X$, is the intersection of all convex sets containing~$X$. There is a standard alternative way to define $\conv X$ using convex combinations. Recall that a \textit{convex combination} of a finite point set in $\mathbb R^d$ is a linear combination of these points with non-negative coefficients that sum up to $1$. Then the convex hull $\conv X$ is the set of all convex combinations of each finite subset of $X$.

A function $f:\mathbb R^d \rightarrow \mathbb R$ is called \textit{convex} if for all $0\leq \lambda \leq 1$ and all points $x, y\in \mathbb R^d$ it satisfies
\[
    f(\lambda x+(1-\lambda)y)\leq \lambda f(x)+(1-\lambda)f(y).
\]

The proofs of Theorems~\ref{theorem tree} and~\ref{theorem matching} both rely on the following simple observation about convex functions.

\begin{proposition}[\cite{barabanshchikova2022intersecting}*{Proposition~10}]
\label{proposition: main tool}
For convex functions $f_1, \dots , f_m : \mathbb R^d \to \mathbb R$, let the function $f : \mathbb R^d \to \mathbb R$ be defined by $f(x) \coloneqq
\max \{f_1(x), \dots , f_m(x)\}$. For any point $x \in \mathbb R^d$, put $I(x) \coloneqq \{1 \leq i \leq m : f_i(x) =
f(x)\}$.

    If the function $f$ attains its global minimum at a point $y \in \mathbb R^d$ and, for each $i\in I(y)$, the function $f_i$ is differentiable at $y$, then we have 
    \[
        o\in \conv \big\{ \nabla f_i (y): i\in I(y)\big\},
    \]
    where $o$ is the origin of $\mathbb R^d$ and $\nabla f_i (y)$ stands for the gradient of the function $f_i$ at the point $y = (y_1, \dots, y_d) \in \mathbb R^d$, that is,
    \[
        \nabla f_i (y) = \Big(\frac{\partial f_i}{\partial y_1} (y), \dots,  \frac{\partial f_i}{\partial y_d} (y)\Big).
    \]
\end{proposition}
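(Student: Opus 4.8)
The plan is to prove the statement by contradiction, using a separating hyperplane. Write $C \coloneqq \conv\{\nabla f_i(y) : i \in I(y)\}$. Since $f(y) = \max_i f_i(y)$ is a maximum over finitely many reals, the index set $I(y)$ is nonempty, so $C$ is a nonempty, compact, convex subset of $\mathbb R^d$. Suppose, for contradiction, that $o \notin C$. Because $C$ is compact and convex and does not contain $o$, the separating hyperplane theorem provides a vector $v \in \mathbb R^d$ and a number $\delta > 0$ such that $\langle v, g \rangle \ge \delta$ for all $g \in C$; in particular $\langle v, \nabla f_i(y) \rangle \ge \delta$ for every $i \in I(y)$. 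Intuitively, $-v$ is then a common descent direction at $y$ for all the functions that are active there, which should contradict the fact that $f$ is minimized at $y$.

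To turn this into a contradiction, I would move from $y$ along $-v$ and examine $f(y - tv)$ for small $t > 0$. For each active index $i \in I(y)$, differentiability of $f_i$ at $y$ together with $f_i(y) = f(y)$ gives the first-order estimate $f_i(y - tv) = f(y) - t\langle v, \nabla f_i(y) \rangle + o(t) \le f(y) - t\delta + o(t)$, so $f_i(y - tv) < f(y)$ once $t$ is small enough. For each inactive index $i \notin I(y)$ we have the strict inequality $f_i(y) < f(y)$, and since a finite convex function on $\mathbb R^d$ is continuous, $f_i(y - tv) < f(y)$ again for all sufficiently small $t$. As there are only finitely many indices, a single $t > 0$ works for all of them simultaneously, whence $f(y - tv) = \max_i f_i(y - tv) < f(y)$, contradicting the global minimality of $f$ at $y$. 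Therefore $o \in C$, which is the desired conclusion.

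I do not anticipate a genuine obstacle here: this is essentially the standard first-order optimality condition for a minimax problem, and the only points demanding a little care are making the first-order and continuity estimates uniform over the finitely many indices $i$, and invoking continuity of the $f_i$ (a consequence of their convexity) in order to control the inactive indices $i \notin I(y)$.
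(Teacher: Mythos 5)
Your argument is correct: strict separation of the origin from the compact convex set $\conv\{\nabla f_i(y):i\in I(y)\}$ yields a common descent direction $-v$, the first-order expansion handles the active indices, continuity (which every finite convex function on $\mathbb R^d$ has) handles the inactive ones, and finiteness of the index set lets you choose one step size $t>0$ that works uniformly, contradicting minimality. Note that the paper itself gives no proof of this proposition --- it is quoted from an earlier paper of the authors --- but your proof is the standard first-order optimality argument for a finite minimax and is exactly the kind of reasoning that the cited source uses, so there is nothing to object to.
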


This proposition can be applied in the context of Tverberg graphs as outlined below. Let $\mathcal G$ be a collection of graphs (matchings, Hamiltonian cycles, trees, etc.) with a common finite vertex set $X\subset \mathbb R^d$. We aim to show that if a properly chosen function $Q:\mathcal G\to \mathbb R$ attains its maximum at a graph $G\in \mathcal G$, then $G$ is a Tverberg graph. In this paper, the sum of Euclidean distances between vertices connected by an edge serves as $Q$.

Next, we introduce an auxiliary function $f_{G}: \mathbb R^d\to \mathbb R$ defined by
\[
    f_G(x)\coloneqq\max \{ f_{ab}(x): ab\in E(G) \},
\]
where differential functions $f_{ab}:\mathbb R^d\to \mathbb R$ are chosen in a way that $f_{ab}(x)\leq 0$ if and only if $x\in B(ab)$. Assume that the origin $o$ is a global minimum point of $f_G$, so if $f_G(o)\leq 0$, the desired claim is proved. Hence, we suppose $f_G(o)>0$. By Proposition~\ref{proposition: main tool} applied to the functions $f_{ab}$, the following balancing condition holds
\[
o\in \conv\big\{\nabla f_{ab}(o): ab\in E(G) \text{ with }f_{ab}(o)=f_G(o)\big\}.
\]
Finally, by analyzing this condition and using $f_G(o)>0$, we try to find a graph $G' \in \mathcal G$ with $Q(G') > Q(G)$. If we succeed in that, we obtain a contradiction to the maximality of $G$, and hence, $f_G(o)\leq 0$, which finishes the proof.

For example, Pirahmad et al.~\cite{pirahmad2022intersecting} use this approach. They show that for any $n$ blue points and any $n$ red points in $\mathbb R^d$, a perfect red-blue matching $\mathcal M$ that maximizes the sum of squared Euclidean distances between the matched points is a Tverberg graph. (So the sum of squared Euclidean distances of edges serves as the function $Q$ in their proof.) Next, they consider the functions $f_{ab}: \mathbb R^d\to \mathbb R$, $ab\in \mathcal M$, defined by
\[
f_{ab}(x)\coloneqq\Big(x-\frac{a+b}{2}\Big)^2-\Big(\frac{a-b}{2}\Big)^2.
\]
The resulting balancing condition in their paper takes the form
\[
o\in \conv\Big\{\frac{a+b}{2}:ab\in \mathcal M\text{ with } f_{ab}(o)=f_{\mathcal M}(o) \Big\}.
\]

In this paper, we apply this approach to prove Theorem~\ref{theorem matching}. The proof of Theorem~\ref{theorem tree} follows a different strategy, although it involves Proposition~\ref{proposition: main tool} as a key ingredient. 
\section{Intersecting balls induced by a max-sum tree}
\label{section proof of theorem tree}
\begin{proof}[Proof of Theorem~\ref{theorem tree}] 
Let $T$ be a max-sum tree of a finite point set $X\subset \mathbb R^d$.
Consider the function $H: \mathbb{R}^d \xrightarrow{}  \mathbb{R}$ defined by
\[
    H(x) \coloneqq \max\limits_{a \in X} \| x - a \|.
\]

This function attains its global minimum at a unique point, which is the center of the smallest radius ball $B$ containing $X$. Without loss of generality, we assume that this point coincides with the origin $o$. Put $r\coloneqq H(o)$, that is, $B$ is a closed ball of radius $r$. If $r=0$, then all points of $X$ coincide with $o$; hence, the theorem is trivial. Thus, we may assume that $r>0$. To complete the proof, we show that for each $pq \in E(T)$, we have
\begin{equation}
    \label{equation: origin lies in the ball B(pq)} o\in B(pq) \text{ or, equivalently, } \langle p, q\rangle\leq 0,
\end{equation}
where $\langle p, q\rangle$ stands for the dot product of $p, q \in \mathbb{R}^d$.

Let $X_r$ be a set containing the points of $X$ lying on the boundary of $B$. So, every point $a \in X \setminus X_r$ belongs to the interior of $B$. By Proposition~\ref{proposition: main tool} applied to the function $H$, we have $o\in \conv X_r$. Thus, there is a convex combination of points in $X_r$ coinciding with~$o$. 
So we can choose a non-empty set $X_r'\subset X_r$ such that there exist \textit{positive} coefficients~$\lambda_a$ for each $a\in X_r'$ satisfying
\begin{equation}
    \label{equation: origin is a weighted sum of Xr'}
    \sum \limits_{a \in X_r'} \lambda_a a = o.
\end{equation}

Let $G$ be the graph with the vertex set $V(G) = X_r'$ and two vertices $a,b\in V(G)$ are adjacent if and only if $\langle a, b \rangle \leq 0$. We claim that the graph $G$ is connected. Indeed, for any proper subset $U \subsetneq V(G)$ and its complementary set $W=V(G)\setminus U$, we have
\[
\sum \limits_{u \in U} \lambda_u u = - \sum \limits_{w \in W} \lambda_w w, \text{\ \ and thus,\ \ }
\Big\langle \sum \limits_{u \in U} \lambda_u u,  \sum \limits_{w \in W} \lambda_w w \Big\rangle  \leq 0.
\]
Since $\lambda_a>0$ for each $a\in V(G)=X_r'$, there are $u\in U$ and $w\in W$ with $\langle u, w\rangle \leq 0$, that is, the edge $uw$ connects $U$ and $W$. Therefore, the graph $G$ is connected. 

Finally, we are ready to prove~(\ref{equation: origin lies in the ball B(pq)}). Suppose to the contrary that there is an edge $p q \in E(T)$ with $\langle p, q\rangle > 0$. 
By \eqref{equation: origin is a weighted sum of Xr'}, we have
\[
\sum \limits_{a \in V(G)} \lambda_a \langle a,  p \rangle = 0,
\]
so there exists a vertex $x \in V(G)$ with $\langle x,  p \rangle \leq 0$. Similarly, there is a vertex $y \in V(G)$ with $\langle y,  q \rangle \leq 0$. The points $x$ and $y$ can coincide.

Since $G$ is a connected graph, consider a path $z_1 \dots z_k$ in $G$ connecting $z_1\coloneqq x$ and $z_k\coloneqq y$. We claim that $p q$ is the shortest line segment among $qp$, $pz_1, z_1 z_2, \dots, z_{k-1} z_k, z_kq$ (or among $qp$, $pz_1, z_1q$ if $k = 1$). Indeed, we have
\begin{align*}
\| p - z_1 \|^2 = \|p-x\|^2= \| p \|^2 + \| x \|^2 - 2 \langle p,  x \rangle &\geq \| p \|^2 + r^2 > \\
 \|p\|^2+r^2- 2 \langle p,  q \rangle &\geq  \| p \|^2 +  \| q \|^2  - 2 \langle p,  q \rangle =  \| p - q \|^2.
\end{align*}
Analogously, one can prove that $\|p-q\|$ is strictly less than $\|z_k  - q \|$ or $\|z_i  - z_{i+1} \|$ for each $i\in \{1,\dots, k-1\}$. 

Next, we will find a tree with a cost larger than that of $T$, and this contradiction will complete the proof. By deleting the edge $pq$ from $T$, we obtain the forest consisting of two trees. Let $U$ and $W$ be the vertex sets of these trees, that is, $X$ is a disjoint union of $U$ and $W$. Without loss of generality, put $p\in U$, and $q\in W$. Let $ab$ be a line segment among $pz_1,z_1z_2,\dots, z_{k-1} z_k,z_k q$ (or among $pz_1, z_1 q$ if $k = 1$) that connects $U$ and $W$. By replacing the edge $pq$ by $ab$ in the tree $T$, we obtain the desired tree.
\end{proof}

Define the \textit{cost} of a graph $G$ with respect to a function $f:\mathbb{R}_+\to \mathbb R$ as the sum
\[
    \sum_{ab\in E(G)} f(\|a-b\|),
\]
where $\mathbb R_+$ is the set of non-negative real numbers.

We say that a tree with vertex set $X$ is an \textit{$f$-max-sum tree} if it has the maximum cost among all trees with vertex set $X$.
Repeating the argument of Theorem~\ref{theorem tree}, one can easily prove the following statement. 
\begin{theorem}
Given an increasing function $f:\mathbb R_+\to \mathbb R$ and a finite point set $X$ in $\mathbb R^d$, an $f$-max-sum tree of $X$ is a Tverberg graph.
\end{theorem}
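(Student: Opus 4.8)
The plan is to show that the proof of Theorem~\ref{theorem tree} never used the precise form $f(t)=t$ of the cost function, only two structural facts: that longer edges are preferred by a max-sum tree (so we may swap a short edge for a longer one and strictly increase the cost), and that the strict inequality $\|p-z_1\|>\|p-q\|$ etc. holds at the level of Euclidean lengths. Since $f$ is increasing, the first fact transfers verbatim: replacing an edge $ab$ by a longer edge $a'b'$ changes the cost by $f(\|a'-b'\|)-f(\|a-b\|)>0$. So the only thing to re-examine is the chain of length comparisons.

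First I would set up everything exactly as before: let $T$ be an $f$-max-sum tree of $X$, let $B$ be the smallest enclosing ball of $X$, center it at the origin $o$, put $r\coloneqq H(o)$ where $H(x)=\max_{a\in X}\|x-a\|$; dispose of the trivial case $r=0$; invoke Proposition~\ref{proposition: main tool} for $H$ to get $o\in\conv X_r$; extract a subset $X_r'\subseteq X_r$ with positive coefficients $\lambda_a$ summing the corresponding points to $o$; and build the auxiliary graph $G$ on vertex set $X_r'$ with $a\sim b$ iff $\langle a,b\rangle\le0$, which is connected by the same two-line argument. The goal is again to prove $\langle p,q\rangle\le 0$ for every edge $pq\in E(T)$.

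Then I would argue by contradiction: suppose $\langle p,q\rangle>0$ for some $pq\in E(T)$. As before, from $\sum_a\lambda_a\langle a,p\rangle=0$ we get $x\in X_r'$ with $\langle x,p\rangle\le0$, and similarly $y\in X_r'$ with $\langle y,q\rangle\le0$; take a path $z_1=x,z_2,\dots,z_k=y$ in $G$. The key chain of inequalities is unchanged and still purely about Euclidean lengths: using $\|x\|=r$ (since $x\in X_r$), $\langle p,x\rangle\le0$, $\langle p,q\rangle>0$, and $\|q\|\le r$,
\[
\|p-z_1\|^2=\|p\|^2+\|x\|^2-2\langle p,x\rangle\ge\|p\|^2+r^2>\|p\|^2+r^2-2\langle p,q\rangle\ge\|p\|^2+\|q\|^2-2\langle p,q\rangle=\|p-q\|^2,
\]
and symmetrically $\|p-q\|<\|z_k-q\|$ and $\|p-q\|<\|z_i-z_{i+1}\|$ for each $i$ (for $k=1$ the cycle is just $q,p,z_1$ with $\|p-q\|$ strictly shortest). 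Deleting $pq$ from $T$ splits $X$ into parts $U\ni p$ and $W\ni q$; some segment $ab$ among $pz_1,z_1z_2,\dots,z_{k-1}z_k,z_kq$ joins $U$ to $W$, and $\|a-b\|>\|p-q\|$. Replacing the edge $pq$ by $ab$ yields a tree $T'$ on $X$ with
\[
\cost_f T'-\cost_f T=f(\|a-b\|)-f(\|p-q\|)>0
\]
since $f$ is increasing, contradicting maximality of $T$. Hence $\langle p,q\rangle\le0$, i.e.\ $o\in B(pq)$, for all edges, so $o\in\bigcap_{pq\in E(T)}B(pq)$ and $T$ is a Tverberg graph.

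There is essentially no hard part here — the whole point is that the argument of Theorem~\ref{theorem tree} was already ``length-combinatorial'' and the additive cost $\cost T=\sum\|a-b\|$ entered only through the single observation that a strictly longer replacement edge strictly increases the cost. The one place that deserves a sentence of care is precisely that monotonicity step: one must note that $f$ being \emph{increasing} (not merely nondecreasing) is exactly what makes $f(\|a-b\|)-f(\|p-q\|)>0$ from the \emph{strict} length inequality $\|a-b\|>\|p-q\|$, so no appeal to convexity or differentiability of $f$ is needed and $f$ may be an arbitrary increasing real function on $\mathbb R_+$. (If one only wanted $f$ nondecreasing, the same proof gives that $T$ can be chosen to be a Tverberg graph, or that some $f$-max-sum tree is Tverberg, but with a strictly increasing $f$ every $f$-max-sum tree is.)
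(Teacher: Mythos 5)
Your proposal is correct and is exactly the argument the paper intends: the paper proves this theorem by simply noting that the proof of Theorem~\ref{theorem tree} carries over, and you have correctly identified that the cost function enters only at the final edge-swap step, where the strict inequality $\|a-b\|>\|p-q\|$ together with $f$ increasing yields a strictly larger cost. No gaps.
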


\section{Intersecting open disks induced by a max-sum matching}
\label{section proof of theorem matching}
\begin{proof}[Proof of Theorem~\ref{theorem matching}]
\label{subsection: proof of the theorem on open disks}
Let $\mathcal M$ be a max-sum matching of an even set $S$ of pairwise distinct points in the plane. Suppose, to the contrary, that the intersection of the open discs induced by $\mathcal M$ is an empty set. To prove the theorem, it is sufficient to find a matching of $S$ with the larger cost.

For each $ab\in \mathcal M$, define the function $f_{ab}:\mathbb R^2 \to \mathbb R$ by
\[
f_{ab}(x)\coloneqq\Big\| x-\frac{a+b}{2} \Big\| - \Big\| \frac{a-b}{2} \Big\|.
\]
Note that $f_{ab}(x)$ is negative if and only if $x$ belongs to the interior of the closed disc $B(ab)$. Its absolute value equals to the smallest distance between $x$ and a point on the bounding circle of $B(ab)$. Clearly, it is differentiable everywhere except at the point $(a+b)/2$. 

Also consider the function $F: \mathbb{R}^2 \xrightarrow{}  \mathbb{R}$ defined by
\[
    F(x) \coloneqq \max\limits_{a b \in \mathcal{M}} f_{ab}(x).
\]
Analogously, $F(x)$ is negative if for each $a b \in \mathcal{M}$,  the interior of $B(ab)$ contains $x$. Otherwise it equals to the radius of the smallest closed disc centered at $x$ that has a non-empty intersection with all closed discs induced by $\mathcal{M}$.

The function $F$ attains its global minimum at a unique point. Without loss of generality, we may assume that this point coincides with the origin $o$. For the sake of brevity, put $r \coloneqq F(o)$. As the open discs induced by $\mathcal M$ have no common point, the origin $o$ lies out of at least one of them, and hence, we conclude $r \geq 0$. So, the closed disc of radius $r$ centered at~$o$ is the smallest radius disc intersecting the closed disc $B(ab)$ for each $ab\in \mathcal M$. Denote the circle bounding this disc of radius $r$ by~$\Omega$.

Consider the submatching
\begin{equation}
    \label{equation: M_1 in the disc problem}
    \mathcal M_r =\Big\{ab\in \mathcal M:\ \Big\| \frac{a+b}{2} \Big\| - \Big\| \frac{a-b}{2} \Big\| = r \Big\}.
\end{equation}
Since all points of $S$ are pairwise distinct and $r \geq 0$, we have that for any edge $ab\in \mathcal M_r$, its midpoint $\frac{a+b}{2}$ is distinct from the origin $o$. Thus for $ab\in \mathcal M_r$, the function $f_{ab}$ is differentiable at the origin. By Proposition~\ref{proposition: main tool} applied to the function $F$, we obtain
$$o\in \conv \Big\{\nabla f_{ab}(o): ab\in \mathcal M_r \Big\} \text{,\ \ and thus,\ \ }  o \in \conv \Big\{\frac{a+b}{2} : ab \in \mathcal M_r\Big\}.$$

Recall that the midpoint of any edge in $\mathcal M_r$ does not coincide with $o$. Thus, the origin either lies on a line segment connecting two midpoints or belongs to the interior of a triangle defined by three midpoints. (Formally, this follows from the two-dimensional case of Carath\'eodory's theorem.) So we denote by $\mathcal M_r'$ any submatching of $\mathcal M_r$ of size $2$ or $3$ that satisfies
\begin{equation}
    \label{equation: origin lies in the convex hull of centers}
    o\in\conv \Big\{ \frac{a+b}{2}:ab\in \mathcal M_r'\Big\}.
\end{equation}

\begin{figure}[h!]
	\centering
	\includegraphics{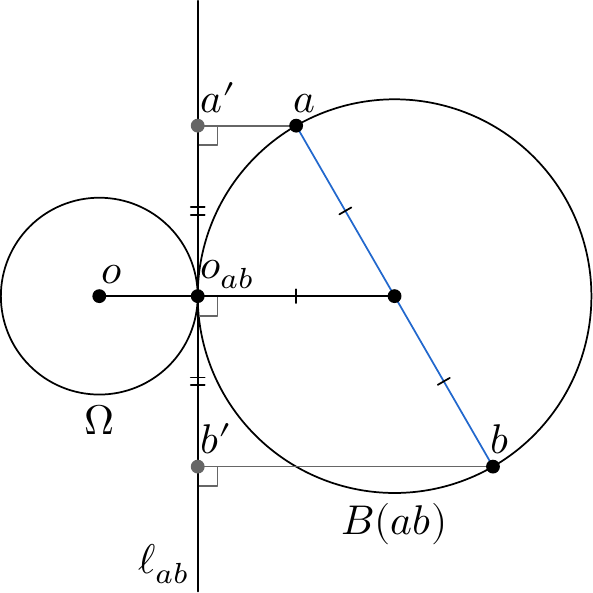}
	\caption{Notation used in the proof of Theorem~\ref{theorem matching}}
	\label{fig:im1}
\end{figure}
Recall that $\Omega$ is the circle centered at the origin of radius $r$. By \eqref{equation: M_1 in the disc problem}, we have that $\Omega$ externally touches the discs $B(a b)$ for $a b \in \mathcal M_r' \subseteq \mathcal M_r$. Let $o_{ab}$ be the only common point of~$\Omega$ and $B(ab)$. Let $\ell_{ab}$ be the common internal tangent line to $\Omega$ and $B(a b)$ passing through $o_{ab}$; see Figure~\ref{fig:im1}. Let $a'$ and $b'$ be the orthogonal projections of the points $a$ and $b$ onto the line~$\ell_{ab}$. Put $r_{a}\coloneqq\|a-a'\|$ and $r_{b}\coloneqq\|b-b'\|$. Denote by $B_a$ and $B_b$ the closed discs of radius $r_a$ and $r_b$ centered at $a$ and $b$, respectively. It is possible that one of them is of radius 0. Hence the line $\ell_{ab}$ touches the discs $B_a$, $B(ab)$, and $B_b$; see Figure~\ref{fig:im2}. 

Since the point $o_{ab}$ is the projection of $\frac{a+b}{2}$ onto the line $\ell_{ab}$ and the point $\frac{a+b}{2}$ is the midpoint of the line segment $ab$, the point $o_{ab}$ is the midpoint of the segment $a'b'$. Moreover,
\begin{equation}
    \label{equation: touching circles}
    r_{a} + r_{b} = 2\Big\| \frac{a+b}{2}-o_{ab}\Big\|=2\Big(\Big\| \frac{a + b}{2} \Big\|-r\Big)=2\|a-b\|.
\end{equation}
Here the second equality follows from the fact that the points $o$, $o_{ab}$, and $\frac{a+b}{2}$ are collinear.
This fact combined with \eqref{equation: origin lies in the convex hull of centers} also implies $o\in \conv \{o_{ab}:ab\in \mathcal M_r'\}.$ Thus, 
\begin{equation}
    \label{equation: convex hull of o_i}
\text{if $r>0$, then any closed semicircle of $\Omega$ contains a point $o_{ab}$, where $ab\in \mathcal M_r'$.}
\end{equation}

Denote by $S_r\subseteq S$ the set of all endpoints of segments of $\mathcal M_r'$. Let $G=(V(G), E(G))$ be the graph with $V(G)=S_r$, whose edge set $E(G)$ is the union of the sets
\[
    E_b(G) =\mathcal M_r' \;\text{ and }\; E_r(G) =\{ab :a,b\in V(G) \text{ with } \| a - b \| > r_a + r_b\}.
\]
We call edges in $E_b(G)$ and $E_r(G)$ \textit{blue} and \textit{red}, respectively. 
For a blue edge $ab$, we have~\eqref{equation: touching circles}, that is, the discs $B_a$ and $B_b$ touch each other externally. Also, $cd$ is a red edge if and only if the discs $B_c$ and $B_d$ are disjoint. 

Let us show that if $G$ contains an alternating cycle, then there is a matching with the larger cost than that of $\mathcal M$. Here, an \textit{alternating cycle} is a simple cycle of even length whose edges are taken alternately from $E_b(G)$ and $E_r(G)$. Indeed, assume that there is a cycle $x_1 y_1 \dots x_m y_m x_{m+1}$, where $x_{m+1} \coloneqq x_1$, such that $x_i y_i \in E_b(G)$ and $y_ix_{i+1}\in E_r(G)$. Hence we have
$$\sum \limits_{i=1}^{m} \lVert y_i - x_{i+1} \rVert > \sum \limits_{i=1}^{2m} (r_{y_i} + r_{x_{i+1}}) = \sum \limits_{i=1}^{2m} (r_{y_i} + r_{x_i}) = \sum \limits_{i=1}^{m} \lVert x_i - y_i \rVert.$$
Replacing in \(\mathcal M\) the blue edges from the alternating cycle with the red ones, we obtain the desired perfect matching
\[
\mathcal M\setminus \{x_1y_1,\dots,x_my_m\}\cup \{ y_1x_2,\dots, y_mx_{m+1}\}.
\]
Thus, finding an alternating cycle in $G$ is sufficient to finish the proof. For that, we use the following two lemmas shown in the next subsection.
\begin{lemma}\label{circle_lemma_1}
If $r > 0$, then for any blue edge $a b$, there is another blue edge $cd$ such that either $ac, ad \in E_r(G)$ or $bc, bd\in E_r(G)$.
\end{lemma}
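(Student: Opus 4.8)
\noindent\emph{Proof plan for Lemma~\ref{circle_lemma_1}.}
The plan is to reduce the statement to a one–variable inequality at a single endpoint of $ab$ and then verify it. First record the following sufficient condition: \emph{if $ab$ and $cd$ are distinct blue edges, $u:=o_{cd}/r$, and some endpoint $x\in\{a,b\}$ satisfies $\langle x,u\rangle+r_x<r$, then $B_x$ is disjoint from $B_c$ and from $B_d$.} Indeed, $\ell_{cd}=\{p:\langle p,u\rangle=r\}$ and the points $c,d$ lie on the side of $\ell_{cd}$ away from $o$, so $\langle c,u\rangle=r+r_c$ and $\langle d,u\rangle=r+r_d$; hence $\lVert x-c\rVert\ge\langle c-x,u\rangle=(r+r_c)-\langle x,u\rangle>r_c+r_x$, and likewise $\lVert x-d\rVert>r_d+r_x$. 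When this condition holds, $xc,xd\in E_r(G)$ and the conclusion of the lemma holds for $ab$ with this $cd$. Thus it suffices, for each blue edge $ab$, either to produce another blue edge $cd$ and an endpoint $x\in\{a,b\}$ with $\langle x,o_{cd}/r\rangle+r_x<r$, or to treat the residual configuration by hand.

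If $|\mathcal M_r'|=2$, say $\mathcal M_r'=\{ab,cd\}$, then $o\in\conv\{o_{ab},o_{cd}\}$ together with $\lVert o_{ab}\rVert=\lVert o_{cd}\rVert=r$ forces $o_{cd}=-o_{ab}$, so $\langle a,o_{cd}/r\rangle+r_a=-(r+r_a)+r_a=-r<r$ and the condition above applies. Now let $|\mathcal M_r'|=3$, say $\mathcal M_r'=\{ab,cd,ef\}$, and fix the blue edge $ab$. Here $o$ lies in the interior of the triangle of the three midpoints; since each tangent point is a radial contraction of the corresponding midpoint towards $o$, the origin also lies in the interior of the triangle $o_{ab}o_{cd}o_{ef}$. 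Consequently the three arcs of $\Omega$ cut off by $o_{ab},o_{cd},o_{ef}$ are each shorter than a semicircle, so the two arcs issuing from $o_{ab}$ have total length exceeding $\pi$; relabelling $cd$ and $ef$ so that the arc from $o_{ab}$ to $o_{cd}$ that avoids $o_{ef}$ is the longer one, this arc exceeds $\pi/2$, whence $\langle o_{ab},o_{cd}\rangle<0$ and also $\langle o_{ab},o_{cd}\rangle\le\langle o_{ab},o_{ef}\rangle$. We prove the lemma for $ab$ using this particular $cd$.

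Place $o$ at the origin, take $o_{ab}=(0,r)$ so that $\ell_{ab}=\{y=r\}$, and write $u:=o_{cd}/r=(\cos\theta,\sin\theta)$ with $\sin\theta=\langle o_{ab},o_{cd}\rangle/r<0$; let $u^{\perp}$ be a unit vector orthogonal to $u$. Using that $\tfrac{a+b}{2}$ projects to $o_{ab}$ on $\ell_{ab}$, that $o_{ab}$ is the midpoint of $a'b'$, and that $r_a+r_b=\lVert a-b\rVert$, one gets $a=(\varepsilon\sqrt{r_ar_b},\,r+r_a)$, $b=(-\varepsilon\sqrt{r_ar_b},\,r+r_b)$ for some $\varepsilon\in\{\pm1\}$, and in the same way $c=(r+r_c)u+\eta\sqrt{r_cr_d}\,u^{\perp}$, $d=(r+r_d)u-\eta\sqrt{r_cr_d}\,u^{\perp}$ with $\eta\in\{\pm1\}$. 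Choose $x\in\{a,b\}$ minimizing $\langle x,u\rangle+r_x$. If this minimum is $<r$, the sufficient condition finishes the proof. If it is at least $r$, then comparing it with the average $\tfrac12\lVert a-b\rVert\,(1+\sin\theta)+r\sin\theta$ of the two values of $\langle x,u\rangle+r_x$ forces $\tfrac12\lVert a-b\rVert\ge r\,\tfrac{1-\sin\theta}{1+\sin\theta}$, i.e.\ the disc $B(ab)$ has large radius; in that regime one instead lets $x$ be the endpoint of $ab$ of smaller radius and bounds $\lVert x-c\rVert$, $\lVert x-d\rVert$ from below directly, projecting $x-c$ and $x-d$ onto $(0,1)$ and onto $u^{\perp}$ and using $\langle c,u\rangle=r+r_c$, $\langle d,u\rangle=r+r_d$, $\langle(0,1),u\rangle=\sin\theta<0$, together with the inequality $\langle o_{ab},o_{cd}\rangle\le\langle o_{ab},o_{ef}\rangle$ that singled out $cd$, to conclude $\lVert x-c\rVert>r_x+r_c$ and $\lVert x-d\rVert>r_x+r_d$.

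The main obstacle is precisely this last estimate, in the case when $B(ab)$ has large radius: there the endpoint of $ab$ opposite to $x$ carries a disc of radius close to $\lVert a-b\rVert$ that genuinely can meet $B_c$, so the half-plane condition fails and one must use the exact placement of $c$ and $d$ relative to $\ell_{cd}$ — both their normal and their tangential components — along with the near-antipodality built into the choice of $cd$, to force the two strict inequalities. Getting the constants to work there is the technical heart of the argument.
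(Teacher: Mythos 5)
Your reduction to the half-plane condition $\langle x,o_{cd}/r\rangle+r_x<r$ is sound and is in fact the same separation mechanism the paper uses (it says exactly that $B_x$ lies strictly on the origin's side of $\ell_{cd}$, while $B_c,B_d$ lie on the other side), and your treatment of the case $|\mathcal M_r'|=2$ is complete and correct. But for $|\mathcal M_r'|=3$ the proof has a genuine gap: in the ``large radius'' regime $\tfrac12\|a-b\|\ge r\,\tfrac{1-\sin\theta}{1+\sin\theta}$ you only sketch that the endpoint of smaller radius should satisfy $\|x-c\|>r_x+r_c$ and $\|x-d\|>r_x+r_d$ via tangential components, and you yourself flag this estimate as the unresolved ``technical heart.'' That step is not a routine verification. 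The deeper problem is the selection rule for the witness edge: you pick $cd$ purely from the angular positions of the tangency points $o_{ab},o_{cd},o_{ef}$ on $\Omega$, but which edge actually separates one of $B_a,B_b$ depends on the \emph{direction of the segment $ab$} (equivalently, on how the radii $r_a,r_b$ are distributed), which your rule ignores. When one of $r_a,r_b$ is much larger than the other, the arc of tangency points $t\in\Omega$ whose tangent line separates $B_a$ (or $B_b$) from the origin's side shifts substantially, and there is no reason the angularly farthest $o_{cd}$ must land in it; it may be $o_{ef}$ that does.

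The paper's proof avoids this entirely: it defines, for $x\in\{a,b\}$, the open arc $\omega(x)\subset\Omega$ of points $t$ such that $B_x$ lies on the origin's side of the tangent at $t$, shows $\omega(a)\cup\omega(b)$ is an open arc containing the two points where the tangents parallel to $s_{ab}$ (the internal tangent of $B_a,B_b$, perpendicular to $ab$) touch $\Omega$ --- hence containing a closed semicircle --- and then invokes the balancing condition \eqref{equation: convex hull of o_i} to conclude that \emph{some} $o_{cd}$ with $cd\in\mathcal M_r'$ lies in $\omega(a)\cup\omega(b)$. For that $cd$ the half-plane separation applies immediately to $a$ or to $b$, with no case split on the size of $B(ab)$ and no tangential estimates. (It also handles $a=o_{ab}$ separately, which in your coordinates is subsumed by $r_a=0$.) To repair your argument, replace the a priori angular choice of $cd$ by this existential step: quantify $\omega(a)$ as $\{t:\langle a,t\rangle/r+r_a<r\}\cap\Omega$, prove $\omega(a)\cup\omega(b)$ contains a closed semicircle using the antipodal tangency points of the lines parallel to $s_{ab}$, and then let \eqref{equation: convex hull of o_i} hand you the right edge.
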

\begin{lemma}\label{circle_lemma_2}
If $r = 0$, then either the graph $G$ contains an alternating cycle of length 4 or for any blue edge $ab$, there is another blue edge $cd$ such that either $ac, bd\in E_r(G)$ or $bc, bd\in E_r(G)$.
\end{lemma}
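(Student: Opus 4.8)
To prove Lemma~\ref{circle_lemma_2} the plan is to split on whether $\lvert\mathcal M_r'\rvert=2$ or $3$, the latter being the heart of the matter, after first recording the shape of the configuration when $r=0$. For a blue edge $ab$ the origin lies on the bounding circle of $B(ab)$, so $\ell_{ab}$ is simply the tangent line to $B(ab)$ at $o$; equivalently $\ell_{ab}=\{x:\langle x,m_{ab}\rangle=0\}$ with $m_{ab}\coloneqq\frac{a+b}{2}\ne o$, both discs $B_a,B_b$ lie in the closed half-plane bounded by $\ell_{ab}$ on the side of $m_{ab}$ and touch $\ell_{ab}$ at the feet $a',b'$ of the perpendiculars from $a,b$, and $o$ is the midpoint of $a'b'$. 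In particular $r_a=\langle a,m_{ab}\rangle/\lVert m_{ab}\rVert\ge 0$, with $r_a=0$ only when $a=o$; as the points of $S$ are distinct, at most one of the at most six endpoints occurring equals $o$, a degenerate possibility to be dispatched by a short separate check. The computational workhorse is the equivalence: for an endpoint $u$ of a blue edge $e$ and an endpoint $v$ of a different blue edge $e'$, writing $\widehat m\coloneqq m/\lVert m\rVert$, the pair $uv$ is red iff $\lVert u-v\rVert>\langle u,\widehat m_e\rangle+\langle v,\widehat m_{e'}\rangle$, i.e.\ iff the closed discs $B_u,B_v$ are disjoint; if instead $\lVert u-v\rVert\le\langle u,\widehat m_e\rangle+\langle v,\widehat m_{e'}\rangle$ then $B_u$ meets $B_v$.

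The next step is a combinatorial reduction. Suppose, towards a contradiction, that $G$ has no alternating $4$-cycle yet the second alternative fails for some blue edge $ab$: for every other blue edge $cd$ neither $\{ac,ad\}$ nor $\{bc,bd\}$ is contained in $E_r(G)$, and absence of an alternating $4$-cycle on $ab$ and $cd$ further forbids $\{ac,bd\}$ and $\{ad,bc\}$. These four forbidden pairs are exactly the edges of the complete bipartite graph on $\{ac,ad,bc,bd\}$ with parts $\{ac,bc\}$ and $\{ad,bd\}$, so the set of red edges among $ac,ad,bc,bd$ — being an independent set — lies inside one part; that is, all red edges between $\{a,b\}$ and $\{c,d\}$ are incident to a single endpoint of $cd$. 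Renaming that endpoint $c$, we conclude that $B_d$ meets both $B_a$ and $B_b$ for every blue edge $cd\ne ab$.

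If $\lvert\mathcal M_r'\rvert=2$ there is a unique other blue edge $cd$, and $o\in[m_{ab},m_{cd}]$ with both midpoints nonzero forces $m_{cd}$ to point opposite to $m_{ab}$, hence $\ell_{ab}=\ell_{cd}=:\ell$, and $\{a,b\}$, $\{c,d\}$ lie on opposite closed sides of $\ell$ with $o$ the common midpoint of $a'b'$ and of $c'd'$. A one-line computation — $\lVert a-c\rVert^2=\lVert a'-c'\rVert^2+(r_a+r_c)^2$ for points on opposite sides of $\ell$, and likewise for the other pairs — shows that at least one of $\{ac,bd\}$, $\{ad,bc\}$ lies in $E_r(G)$, producing an alternating $4$-cycle; the degenerate case $a'=o$ (one of $a,b$ equal to $o$) is even easier, all four cross pairs being red. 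So this case never arises under our assumption, and moreover, in the remaining case $\lvert\mathcal M_r'\rvert=3$, the assumption ``no alternating $4$-cycle'' forces $o$ to lie in the \emph{interior} of the triangle $m_{e_1}m_{e_2}m_{e_3}$ — equivalently, the unit normals $\widehat m_{e_1},\widehat m_{e_2},\widehat m_{e_3}$ positively surround $o$ — since otherwise $o$ lies on a segment between two midpoints and the previous argument again yields an alternating $4$-cycle.

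Thus the crux is the following geometric impossibility: with $e_1=ab$, $e_2$, $e_3$ the three blue edges, $\widehat m_{e_1},\widehat m_{e_2},\widehat m_{e_3}$ surrounding $o$, $B_a$ and $B_b$ externally tangent and touching $\ell_{e_1}$ at feet straddling $o$, and (after relabelling $e_2,e_3$) endpoints $c_2$ of $e_2$ and $c_3$ of $e_3$ whose discs $B_{c_2},B_{c_3}$ each meet both $B_a$ and $B_b$, one should derive a contradiction. The route I would take: (i) let $\tau$ be the common tangent of $B_a$ and $B_b$ at their tangency point $z_1\in ab$; any disc meeting both $B_a$ and $B_b$ must cross $\tau$, and since $B_a,B_b$ pinch down to $z_1$ along $\tau$ one obtains a quantitative lower bound forcing such a disc to reach into a fixed wedge at $z_1$; (ii) translate this, via $r_{c_j}=\langle c_j,\widehat m_{e_j}\rangle$ and $c_j\in\partial B(e_j)$, into the statement that $m_{e_2}$ and $m_{e_3}$ lie in a fixed closed half-plane through $o$ not containing $m_{e_1}$ in its interior; (iii) contradict the fact that $o$ lies in the interior of the triangle $m_{e_1}m_{e_2}m_{e_3}$. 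An alternative worth trying, avoiding the ad hoc geometry of (i), is variational: since $o$ minimises $F$ and $f_{cd}(o+\varepsilon v)=-\varepsilon\langle v,\widehat m_{cd}\rangle+O(\varepsilon^2)$ for $cd\in\mathcal M_r$ while $f_{cd}(o)<0$ for $cd\notin\mathcal M_r$, one has $o\in\conv\{\widehat m_{cd}:cd\in\mathcal M_r\}$, and one would try to use the overlap conditions to produce a direction decreasing $F$. I expect step (i) — extracting a genuine half-plane constraint from ``one disc meets two externally tangent discs'' — to be the main obstacle; the $\lvert\mathcal M_r'\rvert=2$ analysis, the surrounding deduction, and the bookkeeping around endpoints equal to $o$ should be comparatively routine.
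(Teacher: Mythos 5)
Your setup, the combinatorial reduction (the four forbidden pairs $\{ac,ad\}$, $\{bc,bd\}$, $\{ac,bd\}$, $\{ad,bc\}$ form a $K_{2,2}$ on $\{ac,ad,bc,bd\}$, so all red edges between $\{a,b\}$ and $\{c,d\}$ are incident to a single endpoint of $cd$), the case of two antipodal midpoints (equivalently, two discs $B(ab)$, $B(cd)$ touching externally at $o$), and the bookkeeping around a vertex equal to $o$ are all correct and correspond to the first half of the paper's argument. The problem is that the heart of the lemma --- $\lvert\mathcal M_r'\rvert=3$ with $o$ interior to the triangle of midpoints --- is not proved: you describe it as ``the following geometric impossibility'' and offer a three-step route while conceding that step (i) is ``the main obstacle.'' That is a genuine gap, not routine bookkeeping. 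Worse, the claim underpinning step (i) is false as stated: a disc that meets two externally tangent discs $B_a$ and $B_b$ need \emph{not} reach into any fixed wedge at their tangency point (a very large disc can graze both $B_a$ and $B_b$ from the side away from $\ell_{ab}$ while staying far from the tangency point), so no half-plane constraint on $m_{e_2},m_{e_3}$ follows from the overlap hypotheses alone. The variational alternative you mention only re-derives $o\in\conv\{m_{cd}\}$, which is already known, and the step ``use the overlap conditions to produce a descent direction'' is exactly what is missing.

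The paper closes this case by a different and complete argument, which you could substitute for your step (i). Let $s'_{ab}$ be the ray from $o$ along the internal tangent $s_{ab}$ of $B_a$ and $B_b$ (it emanates from $o$ because $o=o_{ab}$ is the midpoint of $a'b'$, so $s_{ab}$ passes through $o$). If $s'_{ab}\subset H_{cd}^+$ for every $cd\in\mathcal M_r'$, then every midpoint makes a non-obtuse angle with the direction of $s'_{ab}$, which is incompatible with $o\in\conv\{m_{cd}\}$ once no two midpoints are collinear (the case you have already dispatched). Hence some $cd$ has $s'_{ab}\setminus\{o\}\subset H_{cd}^-$; since $H_{cd}^-\cap\ell_{ab}$ is an open ray, $H_{cd}^-$ contains one of $B_a$, $B_b$ entirely, so $\ell_{cd}$ strictly separates that disc from $B_c\cup B_d$, producing two red edges at one endpoint of $ab$ and contradicting your reduced statement that some endpoint-disc of $cd$ meets both $B_a$ and $B_b$. (Minor point: the lemma as printed reads ``$ac,bd\in E_r(G)$ or $bc,bd\in E_r(G)$''; what is actually proved, and what you correctly target, is ``$ac,ad$ or $bc,bd$.'')
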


Suppose to the contrary that $G$ contains no alternating cycle. By Lemmas~\ref{circle_lemma_1} and~\ref{circle_lemma_2} applied to the blue edge $a_1b_1$, without loss of generality, we may assume that there is another blue edge $a_2b_2$ such that $a_1a_2$ and $a_1b_2$ are red edges. Since there is no alternating cycle in $G$, there are no more red edges among vertices $a_1, b_1, a_2, b_2$. Thus, by Lemmas~\ref{circle_lemma_1} and~\ref{circle_lemma_2} applied to the blue edge $a_2b_2$, we may assume that there is a third blue edge $a_3b_3$ such that $a_2a_3$ and $a_2 b_3$ are red edges. (In particular, the size of $E_b(G)=\mathcal M_r'$ is 3.) Applying a similar argument, without loss of generality, we may assume that the edges $a_3 a_1$ and $a_3 b_1 $ are red. Hence the graph $G$ contains the alternating cycle $a_1 b_2 a_2 b_3 a_3 b_1$, a contradiction. This finishes the proof of Theorem~\ref{theorem matching}.
\end{proof}

\subsection{Proof of auxiliary lemmas}

For a blue edge $ab\in \mathcal M_r'$, let $u_{ab}$ be the tangency point of the discs $B_a$ and $B_b$. Denote by $s_{ab}$ the common internal tangent of $B_a$ and $B_b$; see Figure~\ref{fig:im2}. Then the line $s_{ab}$ is perpendicular to the line segment $ab$ and passes through the point $u_{ab}$.

To prove Lemmas~\ref{circle_lemma_1} and~\ref{circle_lemma_2}, we will use the following simple observation from elementary geometry: \textit{The line $s_{ab}$ passes through the point $o_{ab}$}. (Recall that in the case $r=0$, the point $o_{ab}$ and the origin $o$ coincide.) 
Indeed, if one of the discs $B_a$ or $B_b$ is a singleton, then it coincides with the point $o_{ab}$, and the tangent line $s_{ab}$ passes through this point (in this case $s_{ab} = \ell_{ab})$. Otherwise the common external tangent $\ell_{ab}$ of $B_a$ and $B_b$ intersects $s_{ab}$ at a unique point $v_{ab}$. Recall that two tangent segments drawn from an external point to a disc have the same length. This property implies that 
\[
\|v_{ab} - a'\| = \|v_{ab} - u_{ab}\| = \|v_{ab} - b'\|.
\] 
Since $o_{ab}$ is the midpoint of the segment $a'b'$, it must coincide with the point $v_{ab}$. Hence $o_{ab}$ lies on $s_{ab}$. 

\begin{proof}[Proof of Lemma \ref{circle_lemma_1}]
First, we show that if $a$ coincides with $o_{ab}$, then it is connected with all other vertices but $b$ by red edges. For any edge $cd \in \mathcal M_r'$ distinct from $ab$, the tangent line $\ell_{cd}$ partitions the plane into the open half-plane containing $\Omega \setminus \{o_{c d}\}$ and the closed half-plane containing $B_c$ and $B_d$. Hence $B_a = \{o_{a b}\} \subset \Omega \setminus \{o_{cd}\}$ lies in the open half-plane. (Here we use that if  $cd\ne a b$, then the points $o_{ab}\ne o_{cd}$, which follows from the minimality of $\mathcal M_r'$.) Therefore, the discs $B_c$ and $B_a$ are strictly separated, and thus, $a c$ is a red edge. Similarly, the edge $a d$ is also red.

From now on, we may assume that $a$ and $b$ are distinct from $o_{ab}$. Denote by $\omega(a)$ the subset of $\Omega$ satisfying the following property. A point $t \in \Omega$ belongs to $\omega(a)$ if and only if $B_a$ lies in the same open half-plane as the origin $o$ with respect to the tangent line to $\Omega$ passing through~$t$. The set $\omega(a)$ is an open arc whose endpoints correspond to the external tangent lines of $\Omega$ and $B_a$. Analogously, we define $\omega(b)$. Clearly, $o_{ab}\not \in \omega(a)$ and $o_{ab}\not\in \omega(b)$.
Since the tangent line to $\Omega$ at $-o_{a b}$ is parallel to $\ell_{a b}$, we conclude that $-o_{a b}$ belongs to the sets $\omega(a)$ and $\omega(b)$, simultaneously. Therefore, the union $\omega(a) \cup \omega(b)$ is also an open arc.


Since the points $a$ and $b$ are distinct from $o_{ab}$, the line $s_{a b}$ meets the open line segment $ab$. Thus, the lines $s_{ab}$ and $\ell_{a b}$ are distinct. Hence $s_{a b}$ intersects the circle $\Omega$ at two points, one of them is $o_{ab}$. Consider two distinct tangent lines $s_a$ and $s_b$ to $\Omega$ that are parallel to $s_{a b}$; see Figure~\ref{fig:im21}. Clearly, the line $s_{ab}$ lies between $s_a$ and $s_b$. Without loss of generality, we may assume that for $x\in \{a,b\}$, the origin $o$ and the disc $B_x$ lie on the same side with respect to $s_x$. Since the lines $s_a$ and $s_b$ touch $\Omega$ at antipodal points, the open arc $\omega(a) \cup \omega(b)$ contains these points. Thus, it also contains a closed semicircle of $\Omega$. By (\ref{equation: convex hull of o_i}), the open arc $\omega(a) \cup \omega(b)$ contains some point $o_{cd}$ for $cd\in \mathcal M_r'$.

Without loss of generality, we may assume $o_{cd} \in \omega(a)$. Hence, the line $\ell_{cd}$ partitions the plane into two half-planes: One of them is open and contains the origin and the disc $B_a$ and another is closed and contains the discs $B_c$ and $B_d$. Therefore, we conclude that $ac$ and $ad$ are red edges.
\end{proof}
\begin{figure}[htp]
    \centering
    \begin{subfigure}[b]{0.5\textwidth}
        \centering
        \includegraphics{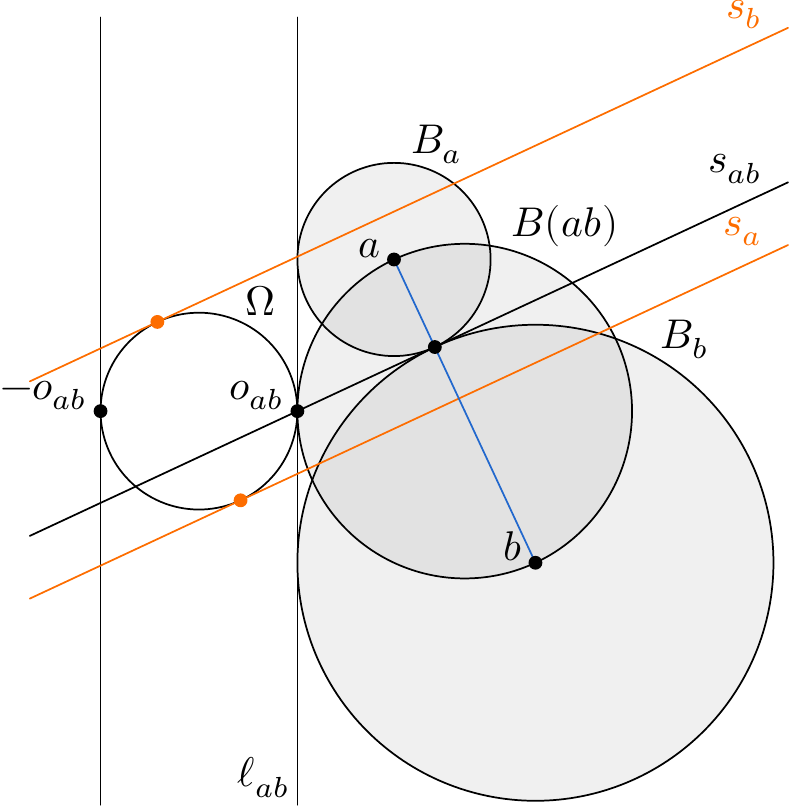}
    \caption{$r > 0$}
    \label{fig:im21}
    \end{subfigure}
    \begin{subfigure}[b]{0.45\textwidth}
        \centering
        \includegraphics{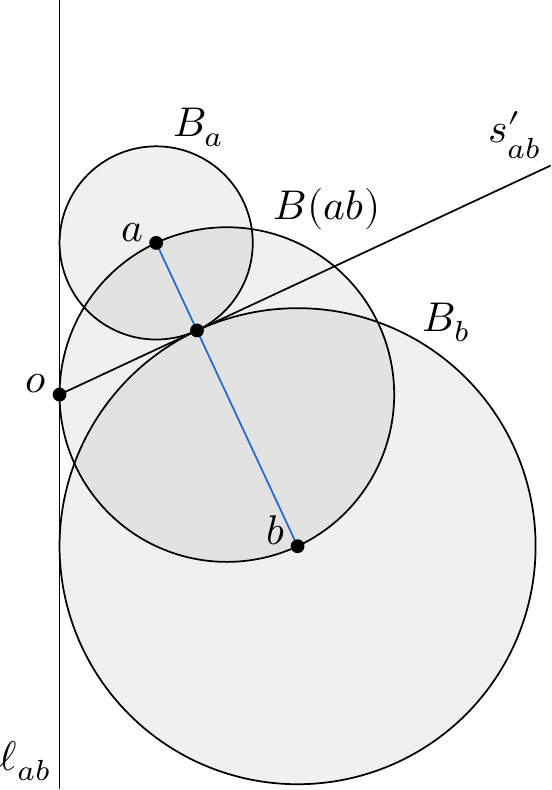}
        \caption{$r = 0$}
    \label{fig:im22}
    \end{subfigure}
	\caption{Proofs of Lemmas~\ref{circle_lemma_1} and \ref{circle_lemma_2}}
    \label{fig:im2}
\end{figure}
\begin{proof}[Proof of Lemma \ref{circle_lemma_2}]

The proof of this lemma is similar to the argument of Lemma~\ref{circle_lemma_1}.

For each edge $a b \in \mathcal M_r'$, the point $o_{a b}$ coincides with the origin $o$. Thus, $o$ lies on the lines $\ell_{a b}$ and $s_{a b}$ and belongs to the boundary of $B(ab)$; see Figure \ref{fig:im22}. Since all points of $S$ are distinct, at most one vertex of $G$ coincides with the origin.

First, we prove that if some two discs $B(ab)$ and $B(cd)$ touch externally, then there is an alternating cycle of length 4. Indeed, then the lines $\ell_{ab}$ and $\ell_{cd}$ coincide and the sets $\{B_a, B_b\}$ and $\{B_c,B_d\}$ lie in the opposite closed half-planes induced by the line~$\ell_{ab}=\ell_{cd}$. Suppose that at least one of the pairs $ac, ad, bc, bd$ is not a red edge. (Otherwise, we easily find an alternating 4-cycle.)
Without loss of generality, assume that the pair $ac$ is \textit{not} a red edge. Then the discs $B_a$ and $B_c$ touch each other at the point $a'=c'$ lying on the line $\ell_{ab}=\ell_{cd}$. We know that the point $b'$ is symmetric to $a'$ with respect to $o_{ab}=o$. Similarly, $d'=-c'$ and therefore $b'=d'$, which implies that the discs $B_b$ and $B_d$ touch each other externally. Note that the point $a'=c'$ coincides with the point $b'=d'$ if and only if the points $a',b',c',d'$ coincide with the origin $o$, that is, two of the points $a,b,c,$ and $d$ coincide with the origin, which is impossible. So, the points $a'=c'$ and $b'=d'$ are distinct. Hence the discs $D_a$ and $D_c$ are disjoint, that is, $ac$ is a red edge. Analogously, $bd$ is a red edge, and so, the 4-cycle $abdc$ is alternating. 

From now on, we assume that among the discs induced by $\mathcal M_r'$ there are no two touching externally. Thus, by~\eqref{equation: origin lies in the convex hull of centers} and the minimality of $\mathcal M_r'$, no two vectors of ${X \coloneqq \{\frac{c+d}{2}:cd\in \mathcal M_r'\}}$ are collinear.

Next, we show that for any blue edge $a b$, there is a blue edge $cd$ such that either $ac, ad\in E_r(G)$ or $b c, bd\in E_r(G)$. First, we consider the case when the point $a$ coincides with the origin $o$. We claim that the vertex $a$ is connected with all other vertices of $G$ but $b$ by red edges. Indeed, for any edge $cd\in\mathcal M_r'$ distinct from $ab$, the disc $B_c$ touches the line $\ell_{cd}$ passing through $o_{cd}=o=a$ at a unique point $c'$. If the point $c'$ coincides with the origin, so does $c$ or $d$. It is impossible because all points of $S$ are distinct. Hence the disc $B_c$ does not intersect $B_a = \{o\}$, and so, the edge $ac$ is red. From now on, we may assume that the vertices $a$ and $b$ are distinct from the origin.

Let $s'_{a b}\subset s_{ab}$ be the ray emanating from $o$ and passing through the only common point of $B_a$ and $B_b$. For any $cd\in \mathcal M_r'$, denote by $H_{cd}^+$ the closed half-plane bounded by the line $\ell_{cd}$ and containing $B_c$ and $B_d$. Put $H_{cd}^- \coloneqq \mathbb R^2\setminus H_{cd}^+$. Suppose that for any $cd \in \mathcal M_r'$ the ray $s'_{ab}$ lies in $H_{cd}^+$. Then the angle between the ray $s'_{ab}$ and each vector of $X$ is either right or acute. By \eqref{equation: origin lies in the convex hull of centers}, we have $o\in \conv X$, which is possible only if the set $X$ has a pair of collinear vectors in the opposite directions, a contradiction. Hence, there exists an edge $cd \in \mathcal M_r'$ such that $s'_{ab}\setminus \{o\} \subset H_{cd}^-$. As the lines $\ell_{ab}$ and $\ell_{cd}$ are distinct, the intersection of the open half-plane $H_{cd}^-$ and the line $\ell_{ab}$ is an open ray. Therefore, $H_{cd}^-$ contains one of the discs $B_a$ or $B_b$ touching this open ray and $s'_{ab}\setminus\{o\}\subset H_{cd}^-$. Hence, the line $\ell_{cd}$ separates the discs $B_c, B_d\subset H_{cd}^+$ from one of the discs $B_a$ or $B_b$ lying in $H_{cd}^-$. So, either $ac, ad\in E_r(G)$ or $b c, bd\in E_r(G)$, which finishes the proof of Lemma~\ref{circle_lemma_2}.
\end{proof}

\section{Discussion}
\label{section discussion}

In this section, we explore the connection between Theorem~\ref{theorem matching} and the following result of Bereg et al.~\cite{bereg2023maximum}.
\begin{theorem}[\cite{bereg2023maximum}*{Theorem~3.14}]
\label{theorem bereg}
A max-sum matching of any even set of points in the plane is a Tverberg graph.
\end{theorem}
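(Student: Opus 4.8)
The plan is to derive Theorem~\ref{theorem bereg} from Theorem~\ref{theorem matching} by splitting into two cases, according to whether the given max-sum matching $\mathcal M$ of the even multiset $S\subset\mathbb R^2$ contains an edge whose two endpoints coincide as points of the plane.

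First I would dispose of the degenerate case. Suppose $pq\in\mathcal M$ with $p=q$. For every other edge $ab\in\mathcal M$, the matching obtained from $\mathcal M$ by replacing the edges $\{pq,ab\}$ with $\{pa,qb\}$ has cost $\cost\mathcal M-\|a-b\|+\|p-a\|+\|p-b\|$, since $\|p-q\|=0$; by the triangle inequality this is at least $\cost\mathcal M$, so maximality of $\mathcal M$ forces $\|p-a\|+\|p-b\|=\|a-b\|$, i.e.\ $p$ lies on the segment $ab$ and hence in $B(ab)$. As trivially $p\in B(pq)=\{p\}$, the point $p$ lies in every disc induced by $\mathcal M$, so $\mathcal M$ is a Tverberg graph.

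In the remaining case every edge of $\mathcal M$ has two distinct endpoints, although $S$ itself may still have repeated points. Here I claim that the proof of Theorem~\ref{theorem matching} applies with essentially no change and yields that the \emph{closed} discs induced by $\mathcal M$ have a common point. Suppose not. Translate so that the minimizer of $F(x)=\max_{ab\in\mathcal M}\bigl(\|x-\tfrac{a+b}{2}\|-\|\tfrac{a-b}{2}\|\bigr)$ is the origin $o$ and put $r=F(o)$, as in that proof; then emptiness of the intersection of the closed discs is exactly the condition $r>0$. For any edge $ab\in\mathcal M_r$ we have $\|\tfrac{a+b}{2}\|=r+\|\tfrac{a-b}{2}\|\ge r>0$, so its midpoint differs from $o$ and $f_{ab}$ is differentiable there — and this is the only place in the proof of Theorem~\ref{theorem matching} where distinctness of $S$ is used, the case-assumption supplying precisely what is needed. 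Everything else carries over: Proposition~\ref{proposition: main tool} gives $o\in\conv\{\tfrac{a+b}{2}:ab\in\mathcal M_r\}$, Carath\'eodory's theorem yields a minimal submatching $\mathcal M_r'\subseteq\mathcal M_r$ of size $2$ or $3$ with $o\in\conv\{\tfrac{a+b}{2}:ab\in\mathcal M_r'\}$, and the graph $G$ together with Lemma~\ref{circle_lemma_1} — whose proof uses only the minimality of $\mathcal M_r'$ and not the distinctness of $S$ — produces an alternating cycle, with Lemma~\ref{circle_lemma_2} not needed since $r>0$. Swapping the blue edges of that cycle for the red ones gives a matching of strictly larger cost, contradicting maximality of $\mathcal M$. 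Hence the closed discs intersect and $\mathcal M$ is a Tverberg graph.

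I expect the main obstacle to be the bookkeeping in the last paragraph: one must re-read the proofs of Theorem~\ref{theorem matching} and Lemma~\ref{circle_lemma_1} and verify that no further use of distinctness is hidden there — in particular, that every claim about a point coinciding with $o$ or with some $o_{ab}$ either belongs to the excluded regime $r=0$ (governed by Lemma~\ref{circle_lemma_2}, which we avoid) or follows from the minimality of $\mathcal M_r'$ rather than from pairwise distinctness of $S$. A softer alternative is a compactness argument — perturb $S$ to nearby sets of distinct points, apply Theorem~\ref{theorem matching}, and take a subsequential limit of the resulting common points — but this only shows that \emph{some} max-sum matching of $S$ is a Tverberg graph, and deducing it for the prescribed $\mathcal M$ would require arranging the perturbation so that $\mathcal M$ remains optimal, which seems more delicate than the direct adaptation above.
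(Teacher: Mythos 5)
Your proof is correct, but it takes a genuinely different route from the paper's. The paper derives Theorem~\ref{theorem bereg} from Theorem~\ref{theorem matching} by exactly the compactness argument you set aside at the end: each edge $ab\in\mathcal M$ is replaced by a slightly elongated segment $a_nb_n\supset ab$ so that the perturbed vertex sets $S_n$ consist of pairwise distinct points, and the point of Lemma~\ref{extension lemma} (Lemma~3.5 of Bereg et al.) is precisely that elongating an edge of a max-sum matching keeps that \emph{same} matching optimal --- which resolves the delicacy you anticipated about the prescribed $\mathcal M$ remaining max-sum under perturbation. Theorem~\ref{theorem matching} then yields common points $x_n$ of the open discs of $\mathcal M_n$, and a subsequential limit lies in $\bigcap_{n} B(a_nb_n)=B(ab)$ for every edge. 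Your alternative --- disposing of a degenerate edge $p=q$ by the swap $\{pq,ab\}\mapsto\{pa,qb\}$, which forces $p\in ab\subseteq B(ab)$ for every other edge, and otherwise re-running only the $r>0$ branch of the proof of Theorem~\ref{theorem matching} --- is also sound: for $ab\in\mathcal M_r$ with $r>0$ one indeed has $\|\tfrac{a+b}{2}\|\geq r>0$ without any distinctness hypothesis, Lemma~\ref{circle_lemma_1} uses only the minimality of $\mathcal M_r'$ (to guarantee $o_{ab}\neq o_{cd}$), and Lemma~\ref{circle_lemma_2} is never invoked. The one piece of bookkeeping you flag does deserve a word: when $S$ has repeated points, $V(G)=S_r$ must be read as the multiset of abstract endpoints of the edges of $\mathcal M_r'$ rather than as a point set, so that $E_b(G)$ is a genuine matching and the alternating-cycle swap produces a valid perfect matching of $S$; with that convention the final case analysis goes through unchanged, since every red edge produced by Lemma~\ref{circle_lemma_1} comes from a strict separation of discs and hence joins geometrically distinct points. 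In terms of trade-offs, the paper's reduction uses Theorem~\ref{theorem matching} as a black box at the cost of the extension lemma and a limiting argument, while yours avoids limits entirely but requires auditing the earlier proof; as a small bonus, your degenerate case exhibits an explicit common point.
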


At first, we use Theorem~\ref{theorem matching} to derive Theorem~\ref{theorem bereg}. Subsequently, we analyze whether the approach of Bereg et al.~\cite{bereg2023maximum} works to ensure the common intersection of open disks if all points in a set are distinct.

\subsection{Theorem~\ref{theorem matching} implies Theorem~\ref{theorem bereg}}

The idea is to properly approximate a given point set by sets with pairwise distinct points and apply Theorem~\ref{theorem matching}.
The key observation is that elongating an edge of a max-sum matching yields another max-sum matching, which is formally stated in the following lemma proposed by Bereg et al.~\cite{bereg2023maximum}. We provide a proof for the sake of completeness. 

\begin{lemma}[\cite{bereg2023maximum}*{Lemma~3.5}]
\label{extension lemma}
Let $\mathcal{M}$ be a max-sum matching of an even set of points $S \subset \mathbb R^2$, and let $ab \in \mathcal{M}$. For any point $c \in \mathbb R^2$ such that $b$ lies on the line segment $ac$, the matching $(\mathcal{M} \cup \{ac\}) \setminus \{ab\}$ has the maximum cost among all perfect matchings with the vertex set $(S \cup \{c\}) \setminus \{b\}$.
\end{lemma}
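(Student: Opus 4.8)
The plan is to prove this by a direct exchange argument, showing that any competitor matching on the vertex set $S' \coloneqq (S \cup \{c\}) \setminus \{b\}$ can be compared against $\mathcal{M}' \coloneqq (\mathcal{M} \cup \{ac\}) \setminus \{ab\}$ by transporting the comparison back to the original vertex set $S$, where we already know $\mathcal{M}$ is optimal. The essential geometric input is the triangle inequality together with the collinearity hypothesis: since $b$ lies on the segment $ac$, we have the exact identity $\|a - c\| = \|a - b\| + \|b - c\|$.

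First I would set up the exchange. Let $\mathcal{N}$ be an arbitrary perfect matching of $S'$. The vertex $c$ is matched in $\mathcal{N}$ to some vertex $x \in S'$; note $x \neq c$ and $x \in S \setminus \{b\}$ (or possibly $x = a$). Form a perfect matching $\mathcal{N}_0$ of $S$ by replacing the edge $cx$ with the edge $bx$, keeping all other edges of $\mathcal{N}$ unchanged; this is legitimate because $S = (S' \setminus \{c\}) \cup \{b\}$ and the only vertex of $S'$ touched is $c$, which is swapped for $b$. By maximality of $\mathcal{M}$ on $S$,
\[
    \cost \mathcal{N}_0 \le \cost \mathcal{M}.
\]
Now I would run the same substitution in reverse on the right-hand side: $\mathcal{M}$ contains the edge $ab$, and replacing it by $ac$ produces exactly $\mathcal{M}'$, so $\cost \mathcal{M}' = \cost \mathcal{M} - \|a - b\| + \|a - c\| = \cost \mathcal{M} + \|b - c\|$ by the collinearity identity. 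On the other side, $\cost \mathcal{N} = \cost \mathcal{N}_0 - \|b - x\| + \|c - x\| \le \cost \mathcal{N}_0 + \|b - c\|$ by the triangle inequality $\|c - x\| \le \|c - b\| + \|b - x\|$. Combining the three relations gives $\cost \mathcal{N} \le \cost \mathcal{N}_0 + \|b - c\| \le \cost \mathcal{M} + \|b - c\| = \cost \mathcal{M}'$, which is exactly the desired inequality $\cost \mathcal{N} \le \cost \mathcal{M}'$.

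The one point that needs a little care — and which I expect to be the only genuine obstacle — is the degenerate case $x = a$, i.e. when the competitor matching $\mathcal{N}$ happens to match $c$ directly to $a$. Then the substitution producing $\mathcal{N}_0$ would create the edge $ab$, which is fine, but one should double-check that the reverse bookkeeping still goes through; in fact it does, because then $\mathcal{N} \supseteq \{ac\}$ and $\mathcal{N}_0 \supseteq \{ab\}$, and the inequality $\cost \mathcal{N} = \cost \mathcal{N}_0 - \|a-b\| + \|a-c\| = \cost \mathcal{N}_0 + \|b-c\|$ is now the exact collinearity identity rather than the triangle inequality, so the chain of inequalities is unaffected. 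A second, even more trivial edge case is when $b = c$ (so the "elongation" is trivial), where $\mathcal{M}' = \mathcal{M}$ and there is nothing to prove. Once these are dispatched, the argument is complete; no convexity machinery or Proposition~\ref{proposition: main tool} is needed here, only the triangle inequality and the maximality of $\mathcal{M}$.
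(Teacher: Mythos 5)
Your argument is correct and is essentially identical to the paper's own proof: both transport an arbitrary competitor matching of $(S \cup \{c\}) \setminus \{b\}$ back to a matching of $S$ by swapping the edge at $c$ for the corresponding edge at $b$, then combine the triangle inequality $\|c-x\| \le \|c-b\| + \|b-x\|$ with the maximality of $\mathcal{M}$ and the collinearity identity $\|a-c\| = \|a-b\| + \|b-c\|$. Your extra remarks on the degenerate cases $x=a$ and $b=c$ are harmless additions not present in the paper.
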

\begin{proof}
Consider an arbitrary perfect matching $\mathcal M'$ with the vertex set $(S \cup \{c\}) \setminus \{b\}$. Let $d \in S$ be a vertex adjacent to $c$ in $\mathcal M'$. Then $(\mathcal M' \cup \{bd\}) \setminus \{cd\}$ is a perfect matching the with vertex set $S$. Using the triangle inequality and the maximality of $\mathcal{M}$, we obtain
\begin{align*}
    \cost \mathcal M' = &\cost ((\mathcal M' \cup \{bd\}) \setminus \{cd\}) + \|c - d\| - \|b - d\| \leq \cost \mathcal M + \|c - b\| = \\ &\cost \mathcal M + \|a - c\| - \|a - b\| = \cost \mathcal ((\mathcal M \cup \{ac\}) \setminus \{ab\}),
\end{align*}
concluding the proof.
\end{proof}

\begin{proof}[Theorem~\ref{theorem matching} $\implies$ Theorem~\ref{theorem bereg}]
Let $\mathcal{M}$ be a max-sum matching of an even set of points $S$. For every edge $ab \in \mathcal{M}$, consider two sequences of points $\{a_n\}_{n = 1}^{\infty}$ and $\{b_n\}_{n = 1}^{\infty}$ converging to $a$ and $b$, respectively, such that $\{a_n b_n\}_{n = 1}^{\infty}$ is a nested sequence of line segments intersecting at the line segment $ab$.
Next, define a sequence of matchings $\{\mathcal M_n\}_{n = 1}^{\infty}$ with 
$\mathcal M_n \coloneqq \{a_n b_n : ab \in \mathcal M \}$. Denote by $S_n$ the vertex set of $\mathcal M_n$. Since $S$ is finite, we can additionally ensure that all points of $S_n$ are pairwise distinct. 

Applying Lemma~\ref{extension lemma} multiple times, we deduce that $\mathcal M_n$ is a max-sum matching of the point set $S_n$. By Theorem~\ref{theorem matching}, open discs induced by $\mathcal M_n$ share at least one common point that we denote by $x_n$. Note that $x_n \in B(a_m b_m)$ whenever $n \geq m$ for all $ab \in \mathcal{M}$, and thus, the sequence $\{x_n\}_{n = 1}^{\infty}$ is bounded. By compactness, its subsequence converges to a point $x \in \mathbb R^2$. For each $ab \in \mathcal{M}$, the point $x$ lies in the intersection $\cap_{n = 1}^{\infty} B(a_n b_n) = B(a b)$. Hence we have
\[
    x \in \bigcap_{ab\in \mathcal{M}} B(ab),
\]
which finishes the proof.
\end{proof}

\subsection{Does Theorem~\ref{theorem bereg} imply Theorem~\ref{theorem matching}?}

Note that Theorem~\ref{theorem bereg} immediately implies Theorem~\ref{theorem matching} for an even set $S$ with a \textit{unique} max-sum matching $\mathcal M$. Indeed, by slightly moving each point of $S$ towards its matched point in $\mathcal M$, we obtain a new point set such that its matching $\mathcal M'$ corresponding to $\mathcal M$ is also max-sum. By Theorem~\ref{theorem bereg}, the matching $\mathcal M'$ is a Tverberg graph. Since the closed disc induced by any edge in the new matching $\mathcal M'$ lies in the interior of the closed disc induced by the corresponding edge in $\mathcal M$, the matching $\mathcal M$ is an open Tverberg graph. 
Unfortunately, we did not find a concise way to deduce Theorem~\ref{theorem matching} for sets with a few max-sum matchings from Theorem~\ref{theorem bereg}. However, we believe that the approach of Bereg et al.~\cite{bereg2023maximum} can be adapted to prove Theorem~\ref{theorem matching}. Next, we briefly expose their argument and point out the issue, which formally requires an additional explanation to complete the proof of Theorem~\ref{theorem matching}.

\begin{figure}[h!]
	\centering
  \includegraphics{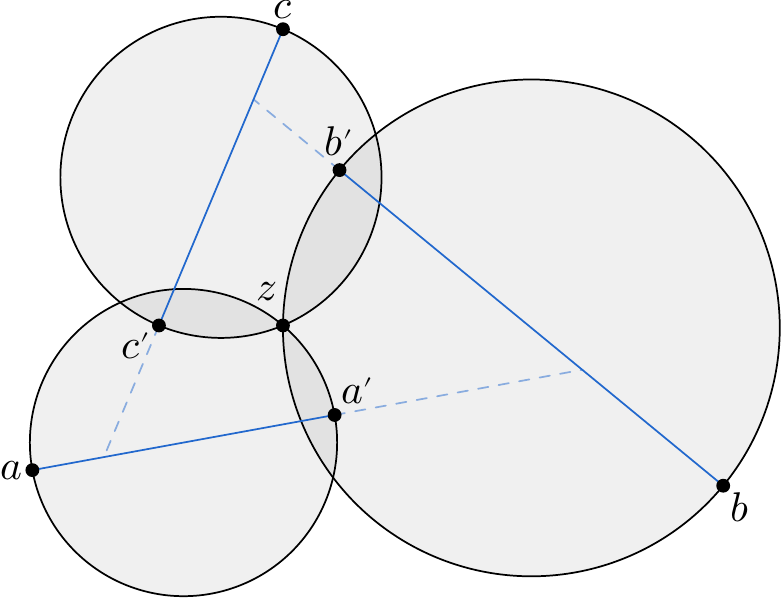}
	\caption{Since the discs induced by the matching $\{aa',bb',cc'\}$ share a single point $z$, then the matching is not max-sum.}
	\label{figure bereg proof}
\end{figure}

First, Bereg et al.~\cite{bereg2023maximum} reduce Theorem~\ref{theorem bereg} to the case when a max-sum matching consists of 3 edges (that is, there are exactly 6 point in a set). Then they observe that there are only 10 possible arrangements of these 3 edges, which they enumerate from (A) to (J); for details, we refer to Figure~4 in~\cite{bereg2023maximum}. They study each configuration independently and show that in each case the closed discs induced by the matching intersect. Moreover, in the cases from (A) to (G), their arguments ensure that the corresponding open discs also intersect under the assumption that the points are distinct. In the remaining cases, they conclude that it is enough to show the following fact: \textit{If a matching $\mathcal M$ consists of 3 edges arranged according to one of the configurations (H), (I), or (G) and the closed discs induced by $\mathcal M$ share a single point distinct from the vertices of $\mathcal M$, then $\mathcal M$ is not a max-sum matching}. Figure~\ref{figure bereg proof} illustrates what happens in the case (H). Finally, they confirm this fact, thereby completing the proof of Theorem~\ref{theorem bereg}. 

Remark that their argument of the fact relies on the assumption that the common point of the discs is distinct from the vertices. Unfortunately, to formally show Theorem~\ref{theorem matching} using the proof from~\cite{bereg2023maximum}, one needs to confirm the fact without this assumption. Essentially, this is the only step missing to complete the proof of Theorem~\ref{theorem matching}. However, given this theorem holds, we believe that this can be verified without involving advanced machinery.

\section{Open problems}
\label{section: open problems}

While a max-sum tree of a finite set of distinct points in $\mathbb R^d$ is a Tverberg graph, it may not necessarily be an open Tverberg graph. For instance, consider a rhombus where one diagonal is shorter than its side. In this case, any max-sum tree of the points in the rhombus consists of the largest diagonal and two opposite sides, as illustrated in Figure~\ref{figure rhombus}. It is easy to verify that the intersection of open discs induced by this max-sum tree is empty. 

Pirahmad et al.~\cite{pirahmad2022intersecting}*{Theorem 1.3} established that for any even set of distinct points in $\mathbb{R}^d$, there exists a perfect matching that is an open Tverberg graph. They also questioned whether a max-sum matching satisfies this property and posed the following problem.

\begin{figure}[h!]
	\centering
  \includegraphics{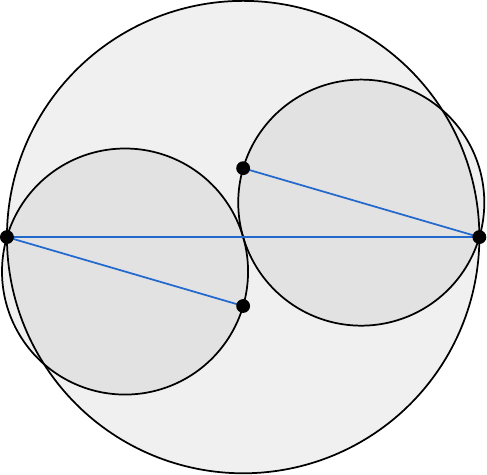}
	\caption{Max-sum tree that is not an open Tverberg graph}
	\label{figure rhombus}
\end{figure}

\begin{problem}[\cite{pirahmad2022intersecting}*{Problem~9.3}]
\label{problem max-sum matching}
Is it true that for any even set of distinct points in $\mathbb R^d$, a max-sum matching is an open Tverberg graph?
\end{problem}

Theorem~\ref{theorem matching} answers this question affirmatively for $d=2$ and some of our observations work for any $d > 2$. In particular, we can consider an analogous function $F$ for $d > 2$ and apply Proposition~\ref{proposition: main tool} to reduce the problem to finding an alternating cycle in a similarly defined graph. Nevertheless, our approach heavily relies on the arrangement of points and cannot be directly extended to higher dimensions. Therefore, this problem remains open in general and it is an intriguing area for further research.

Moreover, we conjecture a quantitative variation of this problem, which resembles the quantitative results and problems of B\'ar\'any, Katchalski, and Pach; see~\cite{barany1982quantitative}.

\begin{conjecture}\label{conjecture quantitative}
For a positive integer $d$, there exists a constant $\varepsilon_d>0$ such that any even set $X$ of (distinct) points in $\mathbb R^d$ with minimum distance $r>0$ satisfies the following property. The intersection of the closed balls induced by a max-sum matching $\mathcal M$ of $X$
contains a ball of radius $\varepsilon_d r$.
\end{conjecture}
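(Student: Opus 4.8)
The plan is to make the argument behind Theorem~\ref{theorem matching} quantitative. By rescaling we may assume $r=1$, so the goal becomes that the convex set $\bigcap_{ab\in\mathcal M}B(ab)=\{x:F(x)\le 0\}$ contains a ball of radius $\varepsilon_d$, where $F(x)=\max_{ab\in\mathcal M}\big(\|x-\tfrac{a+b}{2}\|-\tfrac{\|a-b\|}{2}\big)$ is the function from Section~\ref{section proof of theorem matching}. For any point $x$ and any $\rho\ge0$, the maximum of $F$ over the closed ball of radius $\rho$ centered at $x$ equals $F(x)+\rho$, since for every $m$ the maximum of $\|y-m\|$ over that ball is $\|x-m\|+\rho$. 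Hence the ball lies in $\{F\le0\}$ if and only if $\rho\le-F(x)$, so the inradius of $\bigcap_{ab\in\mathcal M}B(ab)$ equals exactly $-\min F$. It therefore suffices to prove $\min F\le-\varepsilon_d$, which would in particular confirm that a max-sum matching in $\mathbb R^d$ is a Tverberg graph.

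Suppose for contradiction that $t\coloneqq\min F>-\varepsilon_d$, attained at the origin $o$, with $\varepsilon_d<\tfrac12$. Let $\mathcal M_t=\{ab\in\mathcal M:f_{ab}(o)=t\}$. As the points of $X$ have pairwise distances $\ge 1$, each active midpoint satisfies $\|\tfrac{a+b}{2}\|=t+\tfrac{\|a-b\|}{2}\ge t+\tfrac12>0$, so the corresponding $f_{ab}$ are differentiable at $o$. Proposition~\ref{proposition: main tool} and Carath\'eodory's theorem yield a subset $\mathcal M_t'\subseteq\mathcal M_t$ of at most $d+1$ edges with $o\in\conv\{\widehat m_{ab}:ab\in\mathcal M_t'\}$, where $\widehat m_{ab}$ denotes the unit vector towards the midpoint. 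A computation valid irrespective of the sign of $t$ shows that $o_{ab}\coloneqq t\,\widehat m_{ab}$ is the point of $\partial B(ab)$ nearest $o$, that $o\in\conv\{o_{ab}:ab\in\mathcal M_t'\}$, and that the hyperplane $\ell_{ab}$ through $o_{ab}$ orthogonal to $\widehat m_{ab}$ supports $B(ab)$; defining $B_a,B_b$ to be the balls centered at $a,b$ tangent to $\ell_{ab}$ (on the side of $B(ab)$), one still has $r_a+r_b=\|a-b\|$. Form the graph $G$ on the endpoints of $\mathcal M_t'$ with \emph{blue} edges $\mathcal M_t'$ and \emph{red} edges $\{cd:\|c-d\|>r_c+r_d\}$, exactly as in Section~\ref{section proof of theorem matching}; as there, any alternating cycle in $G$ gives a perfect matching of $X$ of strictly larger cost than $\mathcal M$ (using $r_a+r_b=\|a-b\|$ on blue edges), a contradiction. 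So everything reduces to finding an alternating cycle in $G$.

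The main obstacle is exactly the one behind the open Problem~\ref{problem max-sum matching}: one must prove the $\mathbb R^d$-analogues of Lemmas~\ref{circle_lemma_1} and~\ref{circle_lemma_2}, namely that a configuration of at most $d+1$ externally tangent pairs $B_a,B_b$ that is balanced around $o$ through $o\in\conv\{o_{ab}\}$ must contain an alternating cycle --- and, crucially, that this conclusion persists not only for $t\ge0$ but throughout an interval $t\in(-\varepsilon_d,\infty)$ with $\varepsilon_d$ depending only on $d$. For the latter, note that the geometric conditions actually invoked in the proofs of Lemmas~\ref{circle_lemma_1} and~\ref{circle_lemma_2} --- that a semicircle of $\Omega$ is covered by arcs $\omega(a)\cup\omega(b)$, and that certain supporting hyperplanes strictly separate certain $B_a$ from $\{B_c,B_d\}$ --- are \emph{open} in the configuration and in $t$, hence survive a small decrease of $t$ below $0$; the quantitative task is to bound the admissible decrease \emph{uniformly} over all point sets of minimum distance $1$. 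I expect such uniformity to follow from compactness once the unbounded directions are tamed: an edge $ab\in\mathcal M_t'$ of very large length contributes balls $B(ab)$ and $B_a$ whose boundaries pass within $|t|\le\varepsilon_d$ of $o$, so near $o$ they degenerate to half-spaces, and one may take such limits while keeping the finitely many combinatorial types under control --- or, failing a clean compactness argument, from explicit dimensional estimates.

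Concretely I would first carry this out in the plane, where Theorem~\ref{theorem matching} already supplies the qualitative input, so that it remains only to revisit the ten configurations (A)--(J) of Bereg et al.~\cite{bereg2023maximum} (equivalently, the case analysis underlying Lemmas~\ref{circle_lemma_1} and~\ref{circle_lemma_2}) and to check that each separation used there still holds when ``the open discs have no common point'' is relaxed to ``their common intersection has inradius less than $\varepsilon_2$''; this should produce an explicit $\varepsilon_2>0$. In dimension one the extremal situation is transparent: a max-sum matching pairs the $i$-th leftmost point with the $i$-th rightmost one, and the common intersection has inradius exactly $\tfrac12$, so $\varepsilon_1=\tfrac12$ is sharp --- which suggests $\varepsilon_d$ of order $d^{-1/2}$. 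The genuinely new difficulty is the high-dimensional alternating-cycle lemma, and this is where I expect the bulk of the effort to go; it is plausible that settling the qualitative Problem~\ref{problem max-sum matching} would, via the openness remark above, deliver Conjecture~\ref{conjecture quantitative} with little additional work.
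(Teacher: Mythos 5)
This statement is Conjecture~\ref{conjecture quantitative}, which the paper explicitly leaves open; the authors even remark that their own method ``does not allow us to confirm Conjecture~\ref{conjecture quantitative} even in the plane.'' So there is no proof in the paper to compare against, and your text is a research programme rather than a proof. The parts you do carry out are sound: the identification of the inradius of $\bigcap_{ab\in\mathcal M}B(ab)$ with $-\min F$ is correct (since $\max_{y\in \overline{B}(x,\rho)}f_{ab}(y)=f_{ab}(x)+\rho$ for each edge, the ball lies in $\{F\le 0\}$ iff $\rho\le -F(x)$), and the algebraic identities $o_{ab}=t\,\widehat m_{ab}$ and $r_a+r_b=\|a-b\|$ do persist for $t<0$. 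But the two steps that would constitute the actual proof are both missing.

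First, the $\mathbb R^d$ alternating-cycle lemma you need is exactly the content of the open Problem~\ref{problem max-sum matching}; you acknowledge this, so even granting everything else the argument is conditional on an unsolved problem. Second, and more damaging, the claim that the separations in Lemmas~\ref{circle_lemma_1} and~\ref{circle_lemma_2} are ``open in $t$'' and survive a uniform decrease below $0$ is not substantiated and is doubtful as stated: when $t$ crosses $0$ the origin moves from the side of $\ell_{cd}$ opposite to $B_c,B_d$ (for $t>0$) into the interior of every $B(cd)$, i.e.\ onto the \emph{same} side as $B_c,B_d$ (for $t<0$). The proofs of both lemmas hinge on statements of the form ``$\ell_{cd}$ separates the origin and $B_a$ from $B_c$ and $B_d$,'' and the very definition of the arcs $\omega(a)$ refers to the side containing the origin; these are not small perturbations of the $t\ge 0$ picture but a qualitative change of the configuration, so continuity in $t$ alone cannot transport the conclusion across $t=0$. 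Moreover the uniformity of any admissible decrease over all point sets of minimum distance $1$ (edges of $\mathcal M_t'$ can be arbitrarily long, midpoints arbitrarily far from $o$) is only asserted via an expected compactness argument that is not given; the configuration space is non-compact and the degenerations to half-spaces you mention would have to be analyzed case by case. Until both of these gaps are closed, the proposal does not establish the conjecture, consistent with the authors' own assessment that the approach of Section~\ref{section proof of theorem matching} does not yield it even for $d=2$.
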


It seems that our approach developed to prove Theorem~\ref{theorem matching} does not allow us to confirm Conjecture~\ref{conjecture quantitative} even in the plane. So, it remains interesting to verify it in this special case.

\subsection*{Declaration of competing interest}

The authors declare that they have no known competing financial interests or personal relationships that could have appeared to influence the work reported in this paper.

\subsection*{Data availability}

No data was used for the research described in the article.

\subsection*{Acknowledgements}

The research of P.B. was funded by the grant of Russian Science Foundation No. 21-71-10092, \url{https://rscf.ru/project/21-71-10092/}. A.P. is supported by the Bulgarian Ministry of Education and Science, Scientific Programme "Enhancing the Research Capacity in Mathematical Sciences (PIKOM)", No. DO1-67/05.05.2022, and the Young Russian Mathematics award.
\bibliographystyle{siam}
\bibliography{biblio}

\end{document}